\documentclass[10pt]{amsart}

\usepackage{amssymb,epsfig,ams math,amsthm}
\usepackage{stmaryrd}
\usepackage{enumerate}
\usepackage{cite}
\usepackage{comment}
\usepackage[all,cmtip]{xy}

\newtheorem{lemma}{Lemma}[section]
\newtheorem{prop}[lemma]{Proposition}
\newtheorem{thm}[lemma]{Theorem}
\newtheorem{thm'}[lemma]{``Theorem''}

\newtheorem{cor}[lemma]{Corollary}

\newtheorem*{uthm}{Theorem}

\theoremstyle{definition}
\newtheorem{defn}[lemma]{Definition}

\newtheorem{example}[lemma]{Example}

\newtheorem{Scon}[lemma]{Str{\o}m's construction}
\newtheorem{Ccon}[lemma]{Cole's construction}

\theoremstyle{remark}
\newtheorem{rmk}[lemma]{Remark}


%

\makeatletter
\let\c@equation\c@lemma
\makeatother
\numberwithin{equation}{section}

\newcommand{\cC}{\mathcal C}
\newcommand{\cD}{\mathcal D}
\newcommand{\cE}{\mathcal E}

\newcommand{\cI}{\mathcal I}
\newcommand{\cJ}{\mathcal J}

\newcommand{\cM}{\mathcal M}

\newcommand{\R}{\mathbb R}
\newcommand{\C}{\mathbb C}
\renewcommand{\L}{\mathbb L}
\newcommand{\F}{\mathbb F}
\newcommand{\M}{\mathbb M}

\newcommand{\ev}{\mathrm{ev}}
\newcommand{\id}{\mathrm{id}}
\newcommand{\dom}{\mathrm{dom}}
\newcommand{\cod}{\mathrm{cod}}
\newcommand{\bbL}{\mathbb L}
\newcommand{\bbR}{\mathbb R}
\newcommand{\Top}{\mathbf{Top}}
\newcommand{\Set}{\mathbf{Set}}

\newcommand{\sq}{\mathrm{Sq}}
\newcommand{\co}{\colon}

\newcommand{\po}[1][dr]{\save*!/#1+1.2pc/#1:(1,-1)@^{|-}\restore}
\newcommand{\pb}[1][dr]{\save*!/#1-1.2pc/#1:(-1,1)@^{|-}\restore}
\newcommand{\pbalt}[1][ur]{\save*!/#1-1.2pc/#1:(-1,+1)@^{|-}\restore}

\makeatletter\def\l@subsection{\@tocline{2}{0pt}{2pc}{5pc}{}}\makeatother

\title[On the construction of functorial factorizations]{On the construction of functorial factorizations for model categories}
\author{Tobias Barthel}
\address{Department of Mathematics, Harvard University,
Cambridge, MA \ 02138}
\email{tbarthel@math.harvard.edu}
\thanks{The first author was supported in part by a scholarship from Worcester College, Oxford, by the EPSRC, and by the Studienstiftung des deutschen Volkes.}
\author{Emily Riehl}
\address{Department of Mathematics, Harvard University,
Cambridge, MA \ 02138}
\email{eriehl@math.harvard.edu}
\thanks{The second author was supported in part by an NSF postdoctoral fellowship}
\date{\today}

\begin{document}
\begin{abstract}
We present general techniques for constructing functorial factorizations appropriate for model structures that are not known to be cofibrantly generated. Our methods use ``algebraic'' characterizations of fibrations to produce factorizations that have the desired lifting properties in a completely categorical fashion. We illustrate these methods in the case of categories enriched, tensored, and cotensored in spaces, proving the existence of Hurewicz-type model structures, thereby correcting an error in earlier attempts by others. Examples include the categories of (based) spaces, (based) $G$-spaces, and diagram spectra among others.
\end{abstract}

\maketitle

\section{Introduction}\label{intro}

In the late 1960s, Quillen introduced model categories, which axiomatize and thereby vastly generalize a number of classical constructions in algebraic topology and homological algebra. Somewhat ironically, a model category of spaces whose ``cofibrations'' were the classical, meaning Hurewicz, cofibrations and whose ``fibrations'' were the Hurewicz fibrations, established in \cite{stromhomotopy}, is somewhat difficult to obtain. The source of difficulties is two-fold. One has to do with subtleties involving point-set topology. The other obstacle is due to the fact that this model structure is not known to be \emph{cofibrantly generated}: while its fibrations are certainly defined by a lifting property, this lifting property is against a proper class of maps, and not simply a set. In the absence of this set-theoretical condition, there is no general procedure for constructing factorizations whose left and right factors satisfy the desired lifting properties.

In particular, while there exist natural notions of Hurewicz cofibrations and fibrations, Str{\o}m's ideas seem to be confined to the category of spaces.  Only in the last decade has there been progress toward Hurewicz-type model structures in one of their most natural settings: categories enriched, tensored, and cotensored over spaces  \cite{schwaenzlvogt, colemany}. Natural examples include based and unbased spaces, $G$-spaces, and diagram spectra. 
In the presence of a Quillen-type model structure, a Hurewicz-type model structure gives rise to a mixed model structure by an observation of Cole \cite{colemixing}. May and Ponto have advertized mixed model structures on topological spaces and categories of spectra \cite{maypontoconcise2} which combine Quillen- and Hurewicz-type model structures. The weak equivalences and fibrations are the weak homotopy equivalences and the Hurewicz fibrations; the cofibrant objects on spaces are the spaces of the homotopy types of CW complexes. May and Ponto argue that this is the model structure in which homotopy theory has always implicitly worked. For example, in the parametrized world \cite{maysig}, actual cell complexes are subtle and working in the mixed model structure promises a real simplification.

However, the difficulties inherent in this topic resurface in a mistake, recently noticed by Richard Williamson, in a crucial proof in \cite{colemany}, throwing the existence of Hurewicz-type model structures once more into doubt. The result claimed by Cole and proven here allows this philosophy to be applied to topological categories satisfying a smallness condition. In this paper, we present general techniques for producing factorizations for non-cofibrantly generated model categories that make use of the ``algebraic'' perspective on fibrations, explained below. We impose algebraic structures in order to replace point-set level arguments step-by-step with categorical ones, formulating a proof that is not specific to the category at hand. An interesting feature of this perspective is that it precisely identifies the flaw in Cole's proof and simultaneously suggests its solution.

A test case, spelled out in Section 3 and 4, illustrates how we might use the algebraic perspective to circumvent certain point-set level arguments in the construction of factorizations. Malraison and May \cite{malraisonfibrations, mayclassifying} observed that the Moore path space allows for an algebraic characterization of Hurewicz fibrations. Based on their results, we present a new factorization for the Str{\o}m model structure on topological spaces, which in particular avoids Str{\o}m's work on Hurewicz cofibrations \cite{stromnotes1, stromnotes2}. In fact, the construction of this factorization generalizes to any topologically bicomplete category, and we suspect that our arguments could also be used to establish the existence of Hurewicz-type model structures there. However, we prefer an alternative approach which can be more easily adapted to other (non-topological) contexts. This construction, outlined below, takes the ordinary path space as its point of departure but requires more elaborate algebraic machinery.

We explain our methods in analogy with the cofibrantly generated case. The starting point is an observation about Quillen's small object argument, due to Richard Garner \cite{garnercofibrantly,garnerunderstanding}. In a cofibrantly generated model category, a map is a fibration if and only if it has the right lifting property against a particular set of arrows; this is the case just when one can choose a solution to each such lifting problem. These chosen solutions are encoded as a solution to a single lifting problem involving the ``step-one'' factorization of Quillen's small object argument, which factors a map as a trivial cofibration followed by a map that is not typically a fibration. Put another way, a map is a fibration if and only if it admits the structure of an algebra for the (pointed) endofunctor that sends a map to its step-one right factor. In this way, Quillen's step-one factorization gives rise to an ``algebraic'' characterization of the fibrations in any cofibrantly generated model category.

By contrast, the Hurewicz fibrations in a topologically bicomplete category are not characterized by a lifting property against a set of maps. Nonetheless, we show that the ``step-one'' factorization produced by Cole, while not factoring a map into a trivial cofibration followed by a fibration, nonetheless provides a precise algebraic characterization of the fibrations. As above, the right factor of this factorization is a pointed endofunctor whose algebras are precisely the fibrations. A general categorical construction replaces this functorial factorization with another whose right functor is a monad whose algebras are again precisely the fibrations. In particular, the right factor is itself a free algebra and thus a fibration for entirely formal reasons: no point-set topology is necessary for this proof.

It remains to show that the left factor is a trivial cofibration for the model structure; again, on account of the algebraic perspective, no point-set topology is required. Instead, we use a composition criterion to show that the left functor so-constructed is a comonad. In particular, the left factor is a coalgebra for said comonad. For easy formal reasons, such coalgebras lift (canonically) against algebras for the right functor, which proves that the left factor is a trivial cofibration. This yields our main theorem.

\begin{uthm}
On any category $\cC$ enriched, tensored, and cotensored over spaces and satisfying a mild set-theoretical condition there exists a model structure whose fibrations, cofibrations, and weak equivalences are the $h$-fibrations, strong $h$-cofibrations, and homotopy equivalences respectively.
\end{uthm}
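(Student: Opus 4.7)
The plan is to construct the two required functorial factorizations using the algebraic perspective outlined in the introduction and then to verify the remaining model category axioms. For the factorization as a trivial cofibration followed by a fibration, the starting point is Cole's step-one construction, built from the cotensor with the unit interval: it sends a map $f\co X\to Y$ to a factorization $X\to Pf\to Y$ through a mapping path space, assembling into a pointed endofunctor $R$ on the arrow category of $\cC$. The first step is to verify, in the spirit of Malraison and May, that an $R$-algebra structure on a map is precisely a continuous path-lifting function, i.e.\ a Hurewicz connection, so that $R$-algebras are exactly the $h$-fibrations.

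Next, I would feed $R$ into Garner's algebraic small object argument; this is where the mild set-theoretical hypothesis enters, ensuring that the relevant transfinite construction converges. The output is a functorial factorization $f = \mathbb{R}f \circ \mathbb{L}f$ whose right functor $\mathbb{R}$ is a monad and, by a general feature of Garner's construction, whose $\mathbb{R}$-algebras coincide as a category with the $R$-algebras. Hence $\mathbb{R}$-algebras are still exactly the $h$-fibrations, and the right factor $\mathbb{R}f$, being the free $\mathbb{R}$-algebra on $f$, is an $h$-fibration for entirely formal reasons, with no point-set input.

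The main obstacle will be to show that the left factor $\mathbb{L}f$ is a trivial cofibration, that is, a strong $h$-cofibration which is simultaneously a homotopy equivalence. By the composition criterion alluded to in the introduction, the left functor $\mathbb{L}$ carries a comonad structure compatible with the monad $\mathbb{R}$, and $\mathbb{L}f$ inherits a canonical $\mathbb{L}$-coalgebra structure. Every $\mathbb{L}$-coalgebra admits a canonical solution to every lifting problem against every $\mathbb{R}$-algebra, so $\mathbb{L}f$ lifts against every $h$-fibration and is therefore a strong $h$-cofibration. For the homotopy-equivalence statement I would first observe it at the step-one level, where the inclusion $X\to Pf$ is visibly a deformation retract arising from the path-space structure, and then propagate this property through the transfinite construction, using that it is preserved by the pushouts and sequential colimits appearing there.

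Finally, the companion factorization as a strong $h$-cofibration followed by a trivial $h$-fibration is produced by an entirely analogous algebraic small object argument, this time starting from a pointed endofunctor whose algebras characterize the trivial $h$-fibrations (for instance one built from a mapping-cylinder construction rather than a mapping-path one). The remaining model-category axioms are then easy: the two-out-of-three and retract properties for homotopy equivalences are classical, while each lifting axiom is immediate from the algebraic characterizations of the two sides as, respectively, coalgebras and algebras for the relevant functor.
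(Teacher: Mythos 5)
You correctly identify the main thread --- Cole's step-one pointed endofunctor $R$, the Malraison--May style algebraic characterization of $h$-fibrations as $R$-algebras, the smallness hypothesis controlling convergence of the free-monad construction, and the formal argument that the right factor is a fibration because it is a free algebra. But your proposed argument for the left factor has a concrete gap, and it is precisely the gap this paper was written to close. You want to verify at step one that $X \to Ef$ is a strong deformation retract and then ``propagate this property through the transfinite construction, using that it is preserved by the pushouts and sequential colimits appearing there.'' However, Kelly's free-monad sequence (spelled out in \S\ref{app2}) does \emph{not} consist only of pushouts and sequential colimits: because $R$ is not well-pointed (Remark \ref{rmk:wellpointed}), each stage takes a coequalizer, and neither homotopy equivalences nor trivial cofibrations are known to be preserved by such quotients. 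The paper says so explicitly: ``it is not clear a priori that the left factor will still be a trivial cofibration because this construction involves quotienting,'' and this quotienting is also at the root of the flaw in Cole's original argument. The paper's resolution is purely formal: it establishes that $\mathbf{Alg}_R \cong \cI^{\boxslash}$ carries a vertical composition law (Propositions \ref{charfib} and \ref{prop:boxslashcomp}), feeds this into the recognition Theorem \ref{thm:recognition} to conclude that $(\C,\F)$ is an algebraic weak factorization system, and then uses that the left factor $Cf$, as a free $\C$-coalgebra, lifts against every $\F$-algebra, i.e.\ every $h$-fibration. By the Schw\"anzl--Vogt/May--Sigurdsson characterization of the relevant classes, that lifting property \emph{already} exhibits $Cf$ as a strong $h$-cofibration and a homotopy equivalence; your extra deformation-retract propagation is both unnecessary and, as stated, unsound. (Also note that ``lifts against all $h$-fibrations'' gives more than ``strong $h$-cofibration,'' since the latter is defined by lifting only against the \emph{trivial} $h$-fibrations.)

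On the second factorization, the paper does not run an analogous algebraic small object argument; it delegates the (cofibration, trivial fibration) factorization and the remaining model axioms to Schw\"anzl--Vogt, as summarized in May--Sigurdsson (see Corollary \ref{maincor}). Your proposed cylinder-based generic lifting problem would require a characterization of the trivial $h$-fibrations as algebras for a pointed endofunctor, which is substantially less straightforward than the characterization of $h$-fibrations in Proposition \ref{charfib} and is not needed given the existing literature.
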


Abstractly our approach can be described as follows. Suppose given a category with two distinguished classes of morphisms: a class of ``fibrations'' that are characterized as algebras for a pointed endofunctor and a class of ``trivial cofibrations'' that are determined by a lifting property against the fibrations.  In practice, the former often arise as maps admitting solutions for a certain functorially constructed ``generic lifting problem''; the pointed endofunctor is obtained by pushing out this square. If the pointed endofunctor obtained in this manner satisfies a certain ``smallness'' condition, we can then construct a candidate functorial factorization by freely replacing it by a monad. If either of the following conditions hold \begin{itemize} \item[(a)] the category of algebras for the pointed endofunctor admits a vertical composition law, defined in Section \ref{moore2} below, or \item[(b)] the left factor in the factorization associated to the pointed endofunctor is a comonad \end{itemize} then these functors factor a map as a trivial cofibration followed by a fibration. By work of Garner \cite{garnerunderstanding}, both (a) and (b) hold automatically in the cofibrantly generated case, which is thereby subsumed. 

In fact, the methods of this paper generalize effortlessly to produce algebraic factorizations for any enriched bicomplete category equipped with an interval, i.e., a bipointed object, which satisfies a certain smallness condition. This can be used, for instance, to construct model structures on categories enriched in chain complexes. More details will appear in a forthcoming paper with Peter May.

Let us briefly compare this with previous work extending the small object argument to non-cofibrantly generated model categories. The main theorem of \cite{chorny} states that if \begin{itemize}\item [(i)] there is a cardinal $\kappa$ such that the domains of the arrows in the generating class of trivial cofibrations are $\kappa$-small and
\item[(ii)] there exists a functorial construction of a ``generic lifting problem'' in the sense of Remark \ref{redundantrmk}, 
 \end{itemize}
then an analogue of Quillen's small object argument can be used to construct appropriate functorial factorizations. In spaces, the Hurewicz fibrations are generated by the class $\{ A \to A \times I\}$ of cylinder inclusions. There are examples of spaces that are $\kappa$-small only if $\kappa$ exceeds the cardinality of their underlying set. Hence, for topological categories, the condition (i) is unreasonable. By contrast, our conditions (a) and (b) provide sufficient control over the left factor to allow us to weaken the smallness condition.

The structure of this paper parallels the gradual removal of point-set topology; in particular, we introduce categorical notions and results along the way as needed. In Section \ref{strommodel}, we review Str{\o}m's construction of a functorial factorization for the Hurewicz-type model structure on spaces and indicate why it is not suitable for generalization. In Sections \ref{moore1} and \ref{moore2}, we introduce the algebraic perspective on fibrations by considering the Moore path functorial factorization. In Section \ref{hmodel}, we discuss Hurewicz type model structures on any complete and cocomplete category that is tensored, cotensored, and enriched in topological spaces, and prove our main theorem. Finally, in the appendix, we explain the problem with Cole's factorization and present a few more details about our  construction.

\subsection*{Acknowledgments}

We would like to thank Richard Williamson for bringing this problem to our attention and Richard Garner, whose work inspired much of this paper. This work also benefitted from conversations with Richard Garner, Peter May, Bill Richter, Mike Shulman, and Richard Williamson. We would like to thank the referee for many helpful comments and several simplifications. 
In particular, the referee noticed that our original version of Proposition \ref{charfib} could be strengthened, which led to a considerable simplification in the proof of Theorem \ref{technicalmainthm}.
The first author would also like to thank Harvard University for its hospitality. 

\section{Str{\o}m's model structure on spaces}\label{strommodel}

In this short section, we review a few select details of Str{\o}m's construction of a model structure on the category of topological spaces and continuous maps whose weak equivalences are homotopy equivalences, fibrations are Hurewicz fibrations, and cofibrations are closed Hurewicz cofibrations. 

\begin{rmk}\label{convtop}
Even though Str{\o}m works in the category of all topological spaces, we restrict ourselves to a \emph{convenient} category of spaces, denoted $\Top$, which in particular should be cartesian closed. The two most prominent examples are $k$-spaces and compactly generated weak Hausdorff spaces. For a detailed discussion of these point-set issues, we refer the interested reader to  \cite[Ch.~1]{maysig}.
\end{rmk}

Write $I$ for the unit interval, topologized in the standard way with endpoints $0,1$. Recall that \emph{Hurewicz fibrations} are those maps in $\Top$ that have the homotopy lifting property, i.e., the right lifiting property with respect to all inclusions $A \xrightarrow{i_0} A \times I$. Dually, the \emph{Hurewicz cofibrations} are those maps with the homotopy extension property, i.e., the left lifting property against all projections $Y^I \xrightarrow{p_0} Y$.

There is a subtle, but important point here: In order to organize the aforementioned classes of maps into a model structure on $\Top$, we need the (model structure) cofibrations and trivial cofibrations, i.e., those cofibrations that are also homotopy equivalences, to be precisely the maps that lift against the (model structure) trivial fibrations and fibrations, respectively. In general, however, the Hurewicz cofibrations and fibrations don't have this property, but this can be fixed by requiring the (model structure) cofibrations to be \emph{closed} Hurewicz cofibrations.

\begin{rmk} 
Cofibrations in the category of compactly generated spaces are automatically closed, but interestingly this is not the case in the category of $k$-spaces; cf. \cite[1.6.4]{maysig}. Compare with the notion of \emph{strong cofibrations}, introduced by \cite{schwaenzlvogt}, which we discuss in Section \ref{hmodel}.
\end{rmk}

The factorization axiom (CM5) is the most difficult one to establish for this model structure, and here it suffices to construct the trivial cofibration -- fibration factorization. The main point-set level input to demonstrate this factorization is the following result of Str{\o}m's \cite[Thm.~3]{stromnotes1}:

\begin{prop}\label{stromchar}
If $i\co A \to X$ is an inclusion of a strong deformation retract such that there exists a map $q\co X \to I$ with $q^{-1}(0) =A$, then $i$ is a closed Hurewicz cofibration as well as a homotopy equivalence.
\end{prop}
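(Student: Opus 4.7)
The plan is to prove the two conclusions separately: the homotopy equivalence is essentially free from the hypotheses, while the cofibration claim requires a hands-on construction. For the homotopy equivalence, I would simply extract from the strong deformation retract data a retraction $r\co X \to A$ (the endpoint of the given deformation) together with the deformation itself, exhibiting $ri = \id_A$ and $ir \simeq \id_X$. The closedness of $A$ in $X$ is immediate from $A = q^{-1}(0)$ and continuity of $q$.

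The substance of the proposition is the Hurewicz cofibration property. I would use the standard reformulation: $i\co A \hookrightarrow X$ is a closed Hurewicz cofibration if and only if the subspace $M_i := X \times \{0\} \cup A \times I$ is a retract of $X \times I$. The idea is to use the scalar $q(x)$ as a gauge that rescales the deformation $H\co X \times I \to X$ (with $H_0 = \id_X$, $H_1 = ir$, and $H_t|_A = \id_A$). Explicitly, I would set
\[
\rho(x,t) = \begin{cases} \bigl(H(x,\, t/q(x)),\, 0\bigr) & \text{if } t \leq q(x), \\ \bigl(H(x,1),\, t - q(x)\bigr) & \text{if } t \geq q(x), \end{cases}
\]
which visibly lands in $M_i$ (the second clause has first coordinate in $A$) and satisfies $\rho|_{M_i} = \id$. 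The two clauses agree where they overlap, namely on $t = q(x)$.

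The main obstacle is continuity of $\rho$ along the locus where $q$ vanishes, i.e.\ at points $(a, t_0)$ with $a \in A$, since the ratio $t/q(x)$ appearing in the first clause is ill-behaved as $q(x) \to 0$. Here I would exploit the hypothesis that $H$ is stationary on $A$: for any $a \in A$ and any $s \in I$, $H(a,s) = a$, so by continuity of $H$ the value $H(x, s)$ is close to $a$ whenever $(x,s)$ is close to $(a, s')$ for \emph{any} $s' \in I$. Since the rescaled time $t/q(x)$ always lies in the compact interval $[0,1]$, this uniformity suffices to conclude $\rho(x_n, t_n) \to (a, t_0)$ along any approach, regardless of the behavior of the ratio. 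The second clause is continuous by direct inspection, using $H(a,1) = a$.

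Assembled, these steps produce the retraction $\rho$, hence the HEP; combined with the closedness of $A$ and the homotopy equivalence already established, this yields all three conclusions. I expect the continuity verification at $A$ to be the only delicate point; everything else is a direct translation of the hypotheses.
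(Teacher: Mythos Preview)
The paper does not actually prove this proposition; it is quoted as ``the following result of Str{\o}m's'' with a bare citation to \cite[Thm.~3]{stromnotes1}, and the text moves on immediately. Your argument is essentially the classical proof one finds there (or in standard textbook treatments): exhibit an explicit retraction of $X \times I$ onto $X \times \{0\} \cup A \times I$ by using $q$ to rescale the strong deformation retraction $H$. Your formula for $\rho$ is correct, and you have correctly isolated the only delicate point, namely continuity at $A \times \{0\}$ where $t/q(x)$ is ill-defined; compactness of $I$ via a tube-lemma argument handles this, though you should phrase it in terms of neighbourhoods rather than sequences, since sequential continuity need not suffice in an arbitrary space. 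One small wording correction: the retract characterization yields the homotopy extension property, i.e.\ that $i$ is a Hurewicz cofibration; the adjective ``closed'' is the separate condition you already extracted from $A = q^{-1}(0)$.
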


This enables Str{\o}m to prove:

\begin{prop}\label{stromfact}
Every continuous map $f\co X \to Y$ can be factored as a homotopy equivalence and closed Hurewicz cofibration $i$ followed by a Hurewicz fibration $p$.  
\end{prop}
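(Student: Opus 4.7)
I would imitate the classical mapping path space factorization. Given $f \co X \to Y$, form the pullback
\[ Nf := X \times_Y Y^I \ = \ \{ (x, \gamma) \in X \times Y^I : \gamma(0) = f(x) \}, \]
and decompose $f = p \circ i$ via
\[ i \co X \to Nf, \quad x \mapsto (x, c_{f(x)}), \qquad p \co Nf \to Y, \quad (x, \gamma) \mapsto \gamma(1), \]
where $c_y$ denotes the constant path at $y$. That $p$ is a Hurewicz fibration is a routine computation: given a lifting problem over $A$ consisting of an initial lift $g \co A \to Nf$ and a homotopy $h \co A \times I \to Y$ extending $p \circ g$, reparametrize and concatenate each existing path in $g(a)$ with the segment of $h(a,-)$ traversed up to time $t$ to construct the desired homotopy into $Nf$.

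The remaining task is to show $i$ is simultaneously a homotopy equivalence and a closed Hurewicz cofibration, and for this I would invoke Proposition \ref{stromchar}. The inclusion $i$ exhibits $X$ as a strong deformation retract of $Nf$: the retraction $r(x, \gamma) = x$ satisfies $r \circ i = \id_X$, and the homotopy
\[ H \co Nf \times I \to Nf, \qquad H((x, \gamma), t)(s) = \gamma(s(1-t)) \]
interpolates between $\id_{Nf}$ at $t = 0$ and $i \circ r$ at $t = 1$, fixing $i(X)$ pointwise at every time $t$. Applying Proposition \ref{stromchar} then reduces the problem to constructing a continuous map $q \co Nf \to I$ with $q^{-1}(0) = i(X)$ --- in other words, a function vanishing exactly on those pairs $(x, \gamma)$ for which $\gamma$ is the constant path at $f(x)$.

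Producing such a $q$ is the heart of the matter and the expected main obstacle. Any such construction must detect constancy of an arbitrary path in $Y$ continuously --- morally something like $q(x, \gamma) = \sup_{s \in I}\, s \cdot d(\gamma(s), f(x))$ for a suitable gauge $d$ on $Y$ --- and so leans heavily on point-set features of $\Top$, in particular on the interplay between the topology of $Y$ and the compactness of $I$. As flagged in the introduction, this is precisely the input that resists generalization to an arbitrary topologically bicomplete category, which is what motivates the algebraic, Moore-path reformulation developed in Sections \ref{moore1} and \ref{moore2}.
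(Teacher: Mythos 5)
Your proposal reproduces the classical mapping path space factorization $f = p\circ j$ through $Nf$, and the two easy parts are fine: the concatenation argument showing $p$ is a Hurewicz fibration is standard and correct, and the formula $H((x,\gamma),t)(s)=\gamma(s(1-t))$ does exhibit $X$ as a strong deformation retract of $Nf$, fixing $j(X)$ throughout. The gap is in the step you flag as ``the heart of the matter'': the map $q\co Nf\to I$ with $q^{-1}(0)=j(X)$ does \emph{not} exist in general, even in a convenient category of spaces. Such a $q$ would have to continuously detect constancy of an arbitrary path $\gamma$ in an arbitrary space $Y$, and your sketch $q(x,\gamma)=\sup_s\, s\cdot d(\gamma(s),f(x))$ presupposes a gauge or metric $d$ on $Y$ that is simply not available. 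This is not merely an obstruction to enriched generalization; it fails already in $\Top$. Consequently $j\co X\to Nf$ need not be a (closed) Hurewicz cofibration, and the factorization through $Nf$ alone does not prove the proposition. The paper says this explicitly just after the statement: ``This map $j$ is not necessarily a cofibration, so Str{\o}m factors it through the space $E$.''

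The repair is precisely Str{\o}m's extra step. One replaces $Nf$ by the larger space
\[ E \ = \ X\times I \ \bigcup_{X\times(0,1]} \ Nf\times(0,1], \]
glued along $j\times\id_{(0,1]}$, and factors $j$ as $i\co x\mapsto(x,0)$ followed by the projection $\pi'\co E\to Nf$. On $E$ the troublesome $q$ becomes trivial to produce: it is just the projection to the $I$-coordinate, which is well defined on the pushout and satisfies $q^{-1}(0)=X\times\{0\}=i(X)$. So $E$ is introduced exactly to manufacture the function whose existence your argument takes for granted; $i$ is then shown to be a strong deformation retract inclusion, and Proposition \ref{stromchar} applies to $i$ (not to $j$). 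Showing that $p:=\pi\circ\pi'$ is still a Hurewicz fibration then requires additional work (Str{\o}m's Theorems 8 and 9 of \cite{stromnotes2}), since $\pi'$ is not itself a fibration. In short: your proof establishes that $X$ is a strong deformation retract of $Nf$ but omits the construction that makes the left factor a closed cofibration, and that construction is the content of the proposition.
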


The proof can be found in \cite[Prop.~2]{stromhomotopy}, building on earlier work \cite{stromnotes2}. Here, we merely describe the construction so as to highlight the difficulties of na\"{i}vely extending it to topologically enriched categories. Str{\o}m's factorization makes use of 

\begin{defn}\label{defn:mappingspace}
The \emph{mapping path space} $Nf$ of $f\co X \to Y$ is defined to be the pullback 
\[
\xymatrix{Nf \ar[r]^-{\chi_f} \ar[d]_{\phi_f} \pb & Y^I \ar[d]^{p_0}  \\
X \ar[r]_f & Y &}
\]
\end{defn}

\begin{Scon}
Any map $f$ can be factored as $f = \pi \circ j$, with $j\co X \to Nf$ the map that sends a point of $X$ to the constant path at its image under $f$ and $\pi \co Nf \to Y$ evaluation of paths at their endpoint. This map $j$ is not necessarily a cofibration, so Str{\o}m factors it  through the space $E$ formed by gluing \[ E = X \times I \bigcup\limits_{X \times (0,1]} Nf \times (0,1]\] along $j$ and the inclusion of the half open interval. The map $j$ factors as $i \co x \mapsto (x,0)$ followed by the natural projection $\pi'$ obtained by including $E$ into $Nf \times I$ and projecting to the mapping space.  The result is a commutative diagram
\[
\xymatrix{X \ar[r]^f \ar[d]_{i} \ar[dr]^{j} & Y \\
E \ar[r]_{\pi'} & Nf \ar[u]_{\pi}
}
\]
Using Proposition \ref{stromchar} and \cite[Thm.~8 and 9]{stromnotes2}, Str{\o}m checks that $i$ is a trivial  cofibration and $p:=\pi \circ \pi'$ is a fibration. 
\end{Scon}

This construction generalizes without problems to any category $\cC$ enriched, tensored, and cotensored in spaces. In particular, we might define $E$ to be the pushout \[
\xymatrix{X \otimes (0,1] \ar[r]^-{X \otimes i'} \ar[d]_{j \otimes (0,1]} & X \otimes I \ar[d] \\
Nf \otimes (0,1] \ar[r]                                                      & E \po
}
\]
$i'$ being the map induced by the inclusion $(0,1] \to I$. However, Str{\o}m's characterization of trivial cofibrations is not available in the enriched context, so one needs to check directly that, among other things, the following lifting problem can be solved for any $A$:
\[
\xymatrix{A \ar[r] \ar[d]_{i_0}   & E \ar[d]^{p} \\
A \otimes I \ar[r] \ar@{-->}[ru] & Y
}
\]
But $E$ being defined as a colimit, it seems very difficult if not impossible to check that a lift exists and thus that $p$ is indeed a fibration. 

We will come back to this point in \S\ref{cole}.

\section{The Moore paths factorization I}\label{moore1}

We now present a second construction of the trivial cofibration -- fibration factorization for the $h$-model structure on $\Top$ in order to illustrate some of the key ideas involved in the ``algebraic'' perspective on homotopy theory. Following \cite{mayclassifying}, we introduce a functorial factorization based on the Moore path space to characterize the Hurewicz fibrations as \emph{algebras for a pointed endofunctor}. We use this characterization to prove that the right factor is a Hurewicz fibration and then apply Proposition \ref{stromchar} to show that the left factor is a closed Hurewicz cofibration and homotopy equivalence.

Interestingly, because this functorial factorization is particularly nice, the point-set topology input provided by Proposition \ref{stromchar} is not necessary to show that the left factor is a trivial cofibration. We will explain how this works in Section \ref{moore2}, introducing ideas that will be essential for our construction of a suitable functorial factorization for a general topologically bicomplete category in Section \ref{hmodel}.

\subsection{The Moore path space}

Let $Y$ be a space and let $\R^+ = [0, \infty)$. The space $\Pi Y$ of \emph{Moore paths} 
is defined to be the pullback
\begin{equation}\label{moorepath}
\xymatrix{ \Pi Y \pb \ar[d]_{\pi_{\mathrm{end}}} \ar[r] & Y^{\R^+} \times \R^+ \ar[d]^{\mathrm{shift}} \\ Y \ar[r]_-{\mathrm{const}} & Y^{\R^+}}
\end{equation}
The map ``shift'' is adjunct to the map given by precomposing with the addition map $\R^+ \times \R^+ \xrightarrow{+} \R^+$. It has the effect of reindexing a path so that it starts at the indicated time. Unpacking this definition,  $\Pi Y$ can be identified with the set of pairs $(p,t)$, where $t \in \R^+$ and $p\co [0,t] \to Y $ is a path in $Y$ of length $t$, topologized as a subspace of $Y^{\R^+}\times \R^+$.  The map $\pi_{\mathrm{end}}$ sends $(p,t)$ to $p(t)$.

Following \cite{mayclassifying}:

\begin{defn} The \emph{Moore path space} $\Gamma f$ of $f \co X \to Y$ is defined to  be the pullback
\begin{equation}\label{eq:Moorepullback}
\xymatrix{\Gamma f \ar[r] \ar[d]  \pb & \Pi Y \ar[d]^{\pi_0} \\
X \ar[r]_f                                                                                                     & Y 
}
\end{equation}
where $\pi_0\co (p,t) \mapsto p(0)$ is the evaluation of paths at $0$.  In other words, $\Gamma f$ is the set of triples $(p, t, x)$ where $(p,t)$ is a Moore path in $Y$ and $x$ is a point in the fiber over $p(0)$.
\end{defn}

As with $Nf$ above, we use the space $\Gamma f$ to define a factorization 
\begin{equation}\label{moorefact} \xymatrix{ X \ar[rr]^f \ar[dr]_{If} & & Y \\ & \Gamma f \ar[ur]_{Mf}}\end{equation}
The left factor $If \co X \to \Gamma f$ sends a point $x \in X$ to the length-zero path at $f(x)$. The right factor $Mf \co \Gamma f \to Y$ is the endpoint-evaluation map, obtained by composing the top map of (\ref{eq:Moorepullback}) with $\pi_{\mathrm{end}} \co \Pi Y \to Y$.

Unlike the case for the factorization constructed using the ordinary mapping space $Nf$, the Moore path space factors a map into a trivial cofibration $If$ followed by a fibration $Mf$. The proofs of these facts make use of the algebraic perspective on homotopy theory. To proceed, we need a few definitions.

\subsection{Functorial factorizations}
For the reader's convenience, we briefly review the notion of a functorial factorization. Let $\cC$ be any category. Denote by $\cC^{\mathbf{2}}$ the \emph{arrow category} of $\cC$; its objects are arrows of $\cC$, drawn vertically, and its morphisms are commutative squares which compose ``horizontally''. Write $\dom,\cod \co \cC^{\mathbf{2}} \rightrightarrows \cC$ for the evident forgetful functors, defined respectively by precomposing with the domain and codomain inclusions $\mathbf{1} \rightrightarrows \mathbf{2}$ of the terminal category into the category $\bullet \to \bullet$. 

\begin{defn}\label{defn:funfact}
A \emph{functorial factorization} consists of a pair of functors $L,R \co \cC^{\mathbf{2}} \to \cC^{\mathbf{2}}$ such that 
\[
\dom L = \dom, \qquad \cod R = \cod, \qquad \cod L = \dom R,
\]
and with the property that for any $f \in \cC^{\mathbf{2}}$, the composite (in $\cC$) of $Lf$ followed by $Rf$ is $f$.
\end{defn}
It is convenient to assign a name, say $E$, to the common functor $\cod L = \dom R \co \cC^{\mathbf{2}} \to \cC$ that sends an arrow to the object through which it factors. A functorial factorization factors a commutative square 
\begin{equation}\label{eq:functorialfactorization}
\vcenter{\xymatrix{ X \ar[r]^u \ar[d]_f & W \ar[d]^g \\ Y \ar[r]_v & Z}} \qquad \mathrm{as} \qquad 
\vcenter{\xymatrix{ X \ar[r]^u \ar[d]^{Lf} \ar@/_3ex/[dd]_f & W \ar[d]_{Lg} \ar@/^3ex/[dd]^g\\ Ef \ar[r]^{E(u,v)} \ar[d]^{Rf} & Eg \ar[d]_{Rg} \\ Y \ar[r]_v & Z}}
\end{equation}

The functors $L$ and $R$ are equipped with canonical natural transformations to and from the identity on $\cC^{\mathbf{2}}$ respectively, which we denote by $\vec{\epsilon} \co L \to \id$ and $\vec{\eta} \co \id \to R$. The components of these natural transformations at $f \in \cC^{\mathbf{2}}$ are the squares  
\[
\xymatrix{ X \ar[d]_{Lf} \ar@{=}[r] & X \ar[d]^f & & X \ar[d]_f \ar[r]^{Lf} & Ef \ar[d]^{Rf} \\ Ef \ar[r]_{Rf} & Y && Y \ar@{=}[r] & Y}
\]
In other words, $L$ and $R$ are \emph{pointed endofunctors} of $\cC^{\mathbf{2}}$, where we let context indicate in which direction the functors are pointed. An \emph{algebra} for the pointed endofunctor $R$ is defined analogously to the notion of an algebra for a monad, except of course there is no associativity condition in the absence of a multiplication map $\vec{\mu} \co R^2 \to R$. Similarly, a \emph{coalgebra} for the pointed endofunctor $L$ is defined analogously to the notation of a coalgebra for a comonad. Unpacking these definitions we observe:

\begin{lemma}\label{lem:Ralgebradefinition} $f \in \cC^{\mathbf{2}}$ is an $R$-algebra just when there exists a lift
\begin{equation}\label{eq:Ralgebradefinition}
\xymatrix{ X \ar@{=}[r] \ar[d]_{Lf} & X \ar[d]^f \\ Ef \ar[r]_{Rf} \ar@{-->}[ur]^t & Y}
\end{equation} 
Furthermore any choice of lift uniquely determines an $R$-algebra structure for $f$.
Dually, $i \in \cC^{\mathbf{2}}$ is an $L$-coalgebra just when there exists a lift 
\[
\xymatrix{ A \ar[d]_i \ar[r]^{Li} & Ef \ar[d]^{Ri} \\ B \ar@{-->}[ur]_s \ar@{=}[r] & B}
\]
Furthermore any choice of lift uniquely determines an $L$-coalgebra structure for $i$.
\end{lemma}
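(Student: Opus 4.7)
The proof is essentially an exercise in unpacking the definition of an algebra for a pointed endofunctor, and it is worth spelling out precisely because it sets the pattern used repeatedly later. The plan is to identify an $R$-algebra structure on $f$ with a single map $t \co Ef \to X$ in $\cC$, and then to observe that the algebra axioms translate verbatim into the two triangle identities for a lift in \eqref{eq:Ralgebradefinition}.

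First, I would recall that by definition an $R$-algebra structure on $f \in \cC^{\mathbf{2}}$ is a morphism $\alpha \co Rf \to f$ in $\cC^{\mathbf{2}}$ satisfying $\alpha \circ \vec{\eta}_f = \id_f$. Writing $\alpha$ as a commutative square with components $\alpha_0 \co Ef \to X$ and $\alpha_1 \co Y \to Y$, the unit axiom applied componentwise yields $\alpha_0 \circ Lf = \id_X$ on top and $\alpha_1 \circ \id_Y = \id_Y$ on the bottom; in particular, $\alpha_1$ is forced to be $\id_Y$. Conversely, once we set $\alpha_1 = \id_Y$, the square being a morphism in $\cC^{\mathbf{2}}$ is the single equation $f \circ \alpha_0 = Rf$.

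Setting $t := \alpha_0$, the two resulting equations $t \circ Lf = \id_X$ and $f \circ t = Rf$ are precisely the commutativity of the upper and lower triangles in \eqref{eq:Ralgebradefinition}. Thus $R$-algebra structures on $f$ are in bijection with lifts $t$ in that square, proving the first statement. The dual claim about $L$-coalgebras is handled by the same argument run in the opposite category: a coalgebra structure is a map $\beta \co i \to Li$ with $\vec{\epsilon}_i \circ \beta = \id_i$, the counit axiom forces the domain component to be $\id_A$, and the remaining data reduces to a single arrow $s \co B \to Ei$ satisfying $s \circ i = Li$ and $Ri \circ s = \id_B$.

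I do not anticipate any obstacle beyond notational bookkeeping; the only point that deserves care is matching the conventional direction of the pointing ($\vec{\eta} \co \id \to R$ for the algebra case, $\vec{\epsilon} \co L \to \id$ for the coalgebra case), so that one sees which component of the structure map is pinned down to an identity by the unit/counit axiom.
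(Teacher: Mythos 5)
Your proposal is correct and matches the paper's approach: the paper explicitly frames the lemma as a routine unpacking of the definition (``Unpacking these definitions we observe:'') and offers no further proof, and your identification of the algebra map $\alpha \co Rf \to f$ with a single arrow $t = \alpha_0 \co Ef \to X$ satisfying $t \circ Lf = \id_X$ (unit axiom, top component) and $f \circ t = Rf$ (commutativity of the square, once the unit axiom pins $\alpha_1 = \id_Y$) is exactly the intended argument, with the dual reasoning for $L$-coalgebras.
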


A key point, which we will make use of later, is expressed in the following lemma.

\begin{lemma}\label{lem:lifting} Any $L$-coalgebra $(i,s)$ lifts canonically against any $R$-algebra $(f,t)$.
\end{lemma}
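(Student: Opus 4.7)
The plan is to construct the canonical lift directly from the given algebra and coalgebra structures, using the functoriality of the factorization applied to the given lifting square. Suppose we are given a lifting problem
\[
\xymatrix{ A \ar[r]^u \ar[d]_i & X \ar[d]^f \\ B \ar[r]_v & Y}
\]
with $(i,s)$ an $L$-coalgebra and $(f,t)$ an $R$-algebra. Applying the functorial factorization to the square $(u,v)\co i \to f$ in $\cC^{\mathbf{2}}$ produces, as in (\ref{eq:functorialfactorization}), a morphism $E(u,v)\co Ei \to Ef$ compatible with the left and right factors. The natural candidate for the lift is then the composite
\[
B \xrightarrow{\ s\ } Ei \xrightarrow{E(u,v)} Ef \xrightarrow{\ t\ } X,
\]
and I claim this is the desired diagonal filler.

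To verify this I would check the two triangles separately. For the upper triangle, precompose with $i$: using $s\cdot i = Li$ (the coalgebra condition) and the compatibility $E(u,v)\cdot Li = Lf \cdot u$ from functoriality, the composite reduces to $t\cdot Lf \cdot u$, which equals $u$ by the algebra identity $t\cdot Lf = \id_X$. For the lower triangle, postcompose with $f$: using $f\cdot t = Rf$ (the algebra condition) and $Rf\cdot E(u,v) = v\cdot Ri$ from functoriality, the composite reduces to $v\cdot Ri\cdot s$, which equals $v$ by the coalgebra identity $Ri\cdot s = \id_B$. Both checks are pure diagram chases from Lemma \ref{lem:Ralgebradefinition} together with the definition of a functorial factorization; no additional hypotheses on $\cC$ are needed.

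There is essentially no obstacle here: the entire argument is a one-line formula followed by a two-line verification. The only thing worth emphasizing in the write-up is the word \emph{canonical} in the statement — the lift depends only on the chosen algebra and coalgebra structures and on the functorial factorization, not on any further choices, and it is natural in the lifting square $(u,v)$ because $E(u,v)$ is. This naturality will be used implicitly in later sections when assembling these lifts into coherent structures.
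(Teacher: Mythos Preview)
Your proof is correct and is exactly the paper's argument: the paper defines the canonical lift as the composite $t \cdot E(u,v) \cdot s$ via a single diagram, and you have written out the same composite and supplied the explicit verifications of the two triangles that the paper leaves implicit.
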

\begin{proof}
Given a lifting problem, i.e., a commutative square $(u,v) \co i \to f$, the functorial factorization together with the coalgebra and algebra structures define a solution, namely the composite of the dashed arrows:
 \[\raisebox{6.75pc}{\xymatrix{ A \ar[r]^u \ar[d]_{Li} & X \ar@<.75ex>[d]^{Lf}  \\ Ei \ar@{-->}[r]^{E(u,v)} \ar@<-.75ex>[d]_{Ri} & Ef \ar[d]^{Rf} \ar@<.75ex>@{-->}[u]^t \\ B \ar@<-.75ex>@{-->}[u]_s \ar[r]_v & Y}}\qedhere \]
\end{proof}

\subsection{The Moore paths functorial factorization}

The construction (\ref{moorefact}) above defines a functorial factorization $I, M \co \Top^{\mathbf{2}} \to \Top^{\mathbf{2}}$ through the Moore path space. Furthermore, a classical result of May \cite[3.4]{mayclassifying} can be stated as follows:

\begin{prop}\label{maycharfib}
A map is a Hurewicz fibration if and only if it admits the structure of an $M$-algebra.
\end{prop}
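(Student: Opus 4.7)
The plan is to unpack the definition of an $M$-algebra structure via Lemma~\ref{lem:Ralgebradefinition}, which reduces it to a continuous map $t \co \Gamma f \to X$ satisfying $f \circ t = Mf$ and $t \circ If = \id_X$. Interpreting $\Gamma f$ as triples $(p, s, x)$ with $p$ a Moore path of length $s$ in $Y$ and $x \in X$ a chosen lift of $p(0)$, such a $t$ amounts to a continuous choice, in the parameter $(p,s,x)$, of a lift of the endpoint $p(s)$ that reduces to $x$ on the constant length-zero paths. This is precisely the (normalized) Moore-path lifting function of \cite[\S3]{mayclassifying}, so the task is to prove the two-way translation between such a $t$ and the homotopy lifting property against all $i_0 \co A \to A \times I$.

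For the $(\Leftarrow)$ direction, given the $M$-algebra structure $t$, I solve an arbitrary homotopy-lifting problem consisting of $g \co A \to X$ and $H \co A \times I \to Y$ with $H(-,0) = fg$ by constructing a classifying map $\Phi \co A \times I \to \Gamma f$, $(a, s) \mapsto (\bar H_a, s, g(a))$, where $\bar H_a$ is the Moore path of length $s$ obtained from the restriction of $H(a,-)$ to $[0,s]$ (extended constantly past $s$). Then $\tilde H := t \circ \Phi$ is the desired lift: the identity $ft = Mf$ yields $f\tilde H = H$, while $\Phi(a,0) = If(g(a))$ together with $t \circ If = \id$ gives $\tilde H(-,0) = g$.

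For the $(\Rightarrow)$ direction, assuming $f$ is a Hurewicz fibration, I apply the homotopy lifting property to the \emph{universal} path-lifting problem supported on $\Gamma f$ itself: the projection $\pi_X \co \Gamma f \to X$ together with the continuous reparametrized homotopy $\tau \co \Gamma f \times I \to Y$, $((p,s,x), u) \mapsto p(us)$, which agrees with $f\pi_X$ at $u = 0$. A lift $\tilde\tau$ of $\tau$ starting at $\pi_X$, evaluated at $u=1$, produces a candidate $t \co \Gamma f \to X$ with $ft = Mf$ automatic from $f\tilde\tau = \tau$.

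The anticipated obstacle is verifying the unit condition $t \circ If = \id_X$. For $(p,s,x) = (c_{f(x)}, 0, x)$ the universal homotopy $\tau$ is \emph{constant} at $f(x)$ in the $u$-direction, but an unadorned lift $\tilde\tau$ may drift through the fiber $f^{-1}(f(x))$ and return an arbitrary point rather than $x$. Resolving this requires a \emph{normalized} Moore-path lifting function, in which length-zero paths lift to length-zero paths; here the Moore-path framework is essential, since the inclusion of $Y$ in $\Pi Y$ as length-zero constants is a section of $\pi_0$, and one can rescale an arbitrary lift using this splitting to produce a normalized $t$. This is exactly the content of May's construction \cite[Thm.~3.4]{mayclassifying}, which I would invoke to conclude the proof.
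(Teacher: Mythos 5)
The paper offers no proof of its own here; it simply observes that the proposition is a restatement of May's classical result \cite[3.4]{mayclassifying}. Your sketch is a correct and somewhat more informative version of the same strategy: the $(\Leftarrow)$ direction is complete as written, and in the $(\Rightarrow)$ direction you set up the right universal lifting problem, correctly identify that the residual obstacle is the unit (normalization) condition $t \circ If = \id_X$, and defer that step to May — which is precisely what the paper does wholesale. So the approach is essentially the paper's, with the extra value that you make explicit exactly which portion of the argument is being outsourced to \cite{mayclassifying}.
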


Furthermore, as is noted in \cite{malraisonfibrations} and \cite{mayclassifying},  the pointed endofunctor $(M,\vec{\eta})$ extends to a monad $\M = (M, \vec{\eta}, \vec{\mu})$. This is the point at which Moore paths make their key contribution: composition of paths of variable lengths is strictly associative. In particular, the arrows $Mf$ are themselves (free) $M$-algebras, and are hence fibrations.

\begin{lemma}\label{lem:mooremonad}
The Moore paths functorial factorization extends to a monad $\M = (M, \vec{\eta}, \vec{\mu})$ over $\cod$ on the arrow category $\Top^{\mathbf{2}}$.
\end{lemma}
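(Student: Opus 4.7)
The plan is to construct $\vec{\mu}\co M^2 \Rightarrow M$ from the canonical concatenation operation on Moore paths, which is strictly associative and admits strict units. Recall that a point of $\Gamma f$ is a triple $(x,p,s)$ with $x \in X$, $s \in \R^+$, and $p\co [0,s] \to Y$ a path starting at $f(x)$, and that $Mf$ sends such a triple to the endpoint $p(s)$. Unpacking the pullback defining $M^2 f$, a point of $\Gamma(Mf)$ is a pair $\bigl((x,p,s),(q,t)\bigr)$ with $(x,p,s) \in \Gamma f$ and $(q,t) \in \Pi Y$ satisfying the matching condition $q(0) = p(s)$. I define
\[
\mu_f\bigl((x,p,s),(q,t)\bigr) = (x,\, p * q,\, s+t),
\]
where $p*q$ is the Moore path of length $s+t$ given by $p$ on $[0,s]$ and by the translate $u \mapsto q(u-s)$ on $[s,s+t]$; the matching needed to form the concatenation is guaranteed by the pullback, and continuity is standard for concatenation in the Moore path topology. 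Packaging this as the square $(\mu_f, \id_Y) \co M^2 f \to Mf$ in $\Top^{\mathbf{2}}$, I get a natural transformation \emph{over} $\cod$ once I check that $Mf \circ \mu_f = M(Mf)$, which is the computation $(p*q)(s+t) = q(t)$, together with naturality in $f$; both are routine given that $M$ acts on a morphism $(u,v)\co f \to g$ by $(x,p,s) \mapsto (u(x), v \circ p, s)$, which obviously commutes with concatenation.

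I would then verify the monad axioms. The associativity axiom $\vec{\mu} \cdot M\vec{\mu} = \vec{\mu} \cdot \vec{\mu}_M$ reduces to the equality $(p*q)*r = p*(q*r)$ as Moore paths of length $s+t+u$, which holds on the nose; this is precisely the feature distinguishing Moore paths from ordinary parametrized paths, whose concatenation is only associative up to reparametrization. The two unit axioms $\vec{\mu} \cdot M\vec{\eta} = \id_M = \vec{\mu} \cdot \vec{\eta}_M$ reduce to the observation that the length-zero constant path is a strict two-sided unit for concatenation: $\mathrm{const}_{p(0)} * p = p$ and $p * \mathrm{const}_{p(s)} = p$. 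That $\M$ is a monad over $\cod$ is automatic from the construction, since both $\vec{\eta}$ and $\vec{\mu}$ have identity component on codomains.

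There is no substantive obstacle here; Moore paths were engineered precisely so as to admit a strictly associative composition with strict two-sided units, and the content of the lemma is just the observation that this classical composition law organizes itself into a monad structure on the arrow category. The one conceptual check worth highlighting is that the multiplication lives over $\cod$, since this will later be what allows $\M$-algebra structure to be transported along the free/forgetful adjunction without disturbing the ambient arrow-category bookkeeping.
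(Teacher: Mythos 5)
Your construction of $\vec{\mu}$ by Moore-path concatenation is exactly the paper's: the paper likewise defines the domain component $\mu_f\co\Gamma Mf\to\Gamma f$ by concatenating the outer Moore path in $Y$ with the one carried by the $\Gamma f$-coordinate, keeping the fiber point, and leaves the verification of associativity, unitality, naturality, and the triangle identity to the reader. Your write-up just spells out those routine checks (strict associativity and strict units for concatenation, $Mf\circ\mu_f=M^2f$, equivariance under $(u,v)$) that the paper omits.
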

\begin{proof}
We need only define $\mu \co \Gamma Mf \to \Gamma f$, the domain component of the multiplication natural transformation $M^2 \to M$. A point in $\Gamma Mf$ is a Moore path $(p,t)$ in $Y$ together with a point in $\Gamma f$ --- this being itself a Moore path $(p',t')$ in $Y$ together with a point $x$ in the fiber of $p'(0)$ --- such that $p(0)= p'(t')$. The map $\mu$ sends this data to the concatenated path $pp'$ of length $t+t'$ together with the chosen point $x$ in the fiber over $pp'(0)=p'(0)$. The remaining details are left to the reader.
\end{proof}

These results allow for an easy proof of Proposition \ref{stromfact}.

\begin{cor}\label{cor:moorefactorization}
The factorization (\ref{moorefact}) factors $f$ into a trivial cofibration $If$ and a fibration $Mf$. 
\end{cor}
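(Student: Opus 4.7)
The plan is to prove the two factor-properties separately. For the right factor, I would combine Lemma \ref{lem:mooremonad} with Proposition \ref{maycharfib}: because $\M$ is a monad on $\Top^{\mathbf{2}}$, each arrow $Mf$ carries a canonical free $M$-algebra structure, namely the lift of $1_{\Gamma f}$ in the square \eqref{eq:Ralgebradefinition} provided by the multiplication component $\mu_f \co \Gamma Mf \to \Gamma f$ described explicitly in the proof of Lemma \ref{lem:mooremonad}. By Proposition \ref{maycharfib}, $Mf$ is therefore a Hurewicz fibration. No point-set topology beyond the (already verified) associativity of Moore path concatenation enters here.

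For the left factor, I would apply Proposition \ref{stromchar} directly to $If \co X \to \Gamma f$, which is evidently an inclusion onto the length-zero paths. To produce the distinguishing function, define $q \co \Gamma f \to I$ by $q(p,t,x) = t/(1+t)$; this is continuous, and $q^{-1}(0)$ is precisely the image of $If$. To exhibit $X$ as a strong deformation retract of $\Gamma f$, take the retraction $r(p,t,x) = x$ (which satisfies $r \circ If = \id_X$) and define a homotopy $H \co \Gamma f \times I \to \Gamma f$ by sending $((p,t,x),s)$ to the Moore path obtained by truncating the underlying map $\R^+ \to Y$ to length $(1-s)t$ (and keeping $x$ fixed in the fiber). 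At $s=0$ this is the identity, at $s=1$ it is $If \circ r$, and it is stationary on $If(X)$ since paths already of length zero are unaffected. Proposition \ref{stromchar} then delivers that $If$ is both a closed Hurewicz cofibration and a homotopy equivalence, hence a trivial cofibration for the model structure.

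The main obstacle is, unsurprisingly, the continuity of the deformation $H$: one must check that the path-truncation operation is continuous as a map out of the subspace $\Pi Y \subset Y^{\R^+} \times \R^+$, which comes down to verifying that the assignment $(\tilde p, t, s) \mapsto \tilde p(\min(\cdot, (1-s)t))$ is continuous --- a routine but genuinely point-set argument. This is exactly the sort of step that the subsequent section aims to eliminate by replacing Proposition \ref{stromchar} with a purely categorical argument exploiting a comonad structure on $I$ or a composition law on $M$-algebras, foreshadowed in the paragraph introducing Section \ref{moore2}.
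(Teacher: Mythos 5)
Your proposal is correct and follows essentially the same route as the paper: the right factor $Mf$ is a free $M$-algebra by Lemma \ref{lem:mooremonad} and hence a Hurewicz fibration by Proposition \ref{maycharfib}, and the left factor $If$ is handled by invoking Proposition \ref{stromchar}. The only difference is cosmetic --- the paper takes $q(p,t,x)=\min(t,1)$ rather than $t/(1+t)$, and leaves the strong deformation retraction implicit --- and you correctly anticipate the referee's alternative ($I$-coalgebra) argument that the paper records immediately after the proof.
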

\begin{proof}
By Proposition \ref{maycharfib} and Lemma \ref{lem:mooremonad}, $Mf$ is a (free) $M$-algebra and hence a Hurewicz fibration, so the only thing to check is that $If$ is a trivial cofibration. But this follows immediately from Proposition \ref{stromchar}, using the map  $q\co \Gamma f \to [0,1]$ given by sending a Moore path $(p,t,x)$ of length $t$ to $\mathrm{min}(t,1)$. 
\end{proof}


An alternate proof that $If$ is a trivial cofibration, which avoids Str{\o}m's characterization \ref{stromchar}, was suggested by the referee. By Proposition \ref{maycharfib} and Lemma \ref{lem:lifting}, it suffices to show that $If$ is an $I$-coalgebra. Lemma \ref{lem:Ralgebradefinition} says that a map $i \co A \to B$ is an $I$-coalgebra if and only if there is a lift \[ \xymatrix{ A \ar[d]_i \ar[r]^{Ii} & \Gamma i \ar[d]^{Mi} \\ B \ar@{-->}[ur] \ar@{=}[r] & B}\]  This is the case, by the universal property of $\Gamma i$, if and only if $i$ extends to a \emph{Moore strong deformation retract}: a retraction $p$ of $i$ together with a Moore homotopy $h$ from $ip$ to $1_B$ whose components have length zero when restricted along $i$.
\[ \xymatrix{ A \ar[dr]_i \ar@/^/[drr]^{\mathrm{const}} \ar@{=}@/_/[ddr] \\ & B \ar[d]_p \ar[r]^-h \ar@{=}@/_2ex/[rr]|\hole & \Pi B \ar[d]^{\pi_0} \ar[r]^-{\pi_{\mathrm{end}}} & B \\ & A \ar[r]_i & B} \] 
Taking $i$ to be $If$, it is easy to check that the maps in the  pullback \eqref{eq:Moorepullback} define a Moore strong deformation retract, making $If$ an $I$-coalgebra and hence a trivial cofibration.

\begin{rmk} Note, in general the notions of $\M$-algebras (algebras for the full monad) and $M$-algebras (algebras for the pointed endofunctor) are distinct; the former is more restrictive. We will always take care to use a blackboard bold letter to distinguish algebras for the monad from algebras for the pointed endofunctor of the same name.
But in fact, because these functors arise in functorial factorizations, every $M$-algebra is a retract of an $\M$-algebra, namely, its right factor. In particular, a map has the left lifting property with respect to the $M$-algebras if and only if it has the left lifting property with respect to the $\M$-algebras. 
\end{rmk}

For the Moore paths factorization, $\M$-algebras are those Hurewicz fibrations that admit a ``transitive path lifting function'' in the terminology of \cite{mayclassifying}. The free algebras $Mf$ are both $\M$-algebras and $M$-algebras.

\section{The Moore paths factorization II}\label{moore2}

In fact, the Moore paths functorial factorization is an example of an \emph{algebraic weak factorization system}, defined below. This structure provides tighter algebraic control over the trivial cofibrations and fibrations, which in particular can be used to show that the left factor of a map always lifts against any algebra for the right factor.

Our proof that the Moore paths functorial factorization defines an algebraic weak factorization system uses a simple characterization, due to Richard Garner, that allows us to identify categories of algebras for the monad of an algebraic weak factorization system existing ``in the wild.''

 In Section \ref{hmodel}, by extending the methods introduced here, we will be able to construct functorial factorizations appropriate for categories that are enriched, tensored, and cotensored over topological spaces, but where point-set level characterizations of classes of maps in the ambient category are not generally available.

\subsection{Composition of algebras}

Let $L,R \co \cC^{\mathbf{2}} \to \cC^{\mathbf{2}}$ define a functorial factorization. To simplify the following discussion, we consider only algebras for the right factor $R$; dual results apply to the case of coalgebras for the left factor $L$.

\begin{defn}\label{defn:mapRalg}
A morphism $(u,v) \co f \to f'$ in $\cC^{\mathbf{2}}$, i.e., a commutative square (\ref{eq:functorialfactorization}) is a \emph{map of} $R$-\emph{algebras} if the square of lifts displayed in the interior of the cube 
\begin{equation}\label{eq:mapRalg}
\xymatrix@=15pt{ X \ar[dd]_{Lf} \ar[rr]^u \ar@{=}[dr] & & X' \ar@{=}[dr] \ar'[d]_{Lf'}[dd] \\ & X \ar[rr]^(.7)u \ar[dd]_(.65)f & & W \ar[dd]^{f'} \\ Ef \ar[dr]_{Rf} \ar'[r][rr]^(.4){E(u,v)} \ar@{-->}[ur]^s & & Ef' \ar@{-->}[ur]_{s'} \ar[dr]_{Rf'} \\ & Y \ar[rr]_v  & & Y'}
\end{equation}
commutes, i.e., if $u \cdot s = s' \cdot E(u,v)$.
\end{defn}

\begin{example}\label{ex:identityRalgebramap} The identity arrow at any object is always an $R$-algebra with a unique $R$-algebra structure given by its right factor.  Furthermore, for any $R$-algebra $(f,s)$, the map $(f,1_Y) \co (f,s) \to (1_X,R1_X)$ is an $R$-algebra map. The proof is a one line diagram chase: \[ R1_X \cdot E(f,1_Y) = 1_Y \cdot Rf = Rf = f \cdot s \] by (\ref{eq:functorialfactorization}) and (\ref{eq:Ralgebradefinition}).
\end{example}

Write $\mathbf{Alg}_R$ for the category of $R$-algebras and $R$-algebra maps. Via the forgetful functor $\mathbf{Alg}_R \to \cC^{\mathbf{2}}$, $R$-algebras can be viewed as objects in $\cC^2$. Using composition in $\cC$, objects and morphisms in  $\cC^{\mathbf{2}}$ can be composed ``vertically''. We say the category $\mathbf{Alg}_R$ \emph{admits a vertical composition law} if this composition operation can be lifted along the forgetful functor.

\begin{defn}\label{def:complaw} The category $\mathbf{Alg}_R$ \emph{admits a vertical composition law} if \begin{itemize} 
 \item[(i)] whenever $(f,s)$ and $(g,t)$ are $R$-algebras such that $\cod f = \dom g$, we can specify an $R$-algebra structure $t \bullet s$ for $gf$, in such a way that this composition operation is associative
 \item[(ii)] furthermore, for any maps $(u,v) \co (f,s) \to (f',s')$ and $(v,w) \co (g,t) \to (g',t')$ of $R$-algebras between composable pairs $(f,s), (g,t)$ and $(f',s')$, $(g',t')$, then $(u,w) \co (gf, t \bullet s) \to (g'f', t'\bullet s')$ is a map of $R$-algebras. 
\end{itemize}
\end{defn}

In other words, $\mathbf{Alg}_R$ admits a vertical composition law if both $R$-algebras and $R$-algebra maps can be composed vertically. This latter condition says that the vertical composite of the squares underlying $R$-algebra maps must again be an $R$-algebra maps with respect to the composite $R$-algebras. 

\begin{rmk} Concisely, a vertical composition law equips the category $\mathbf{Alg}_R$ with the structure of a \emph{double category}.
\end{rmk}

\begin{example}\label{ex:cgcomposition} For example, suppose $L,R$ are defined by pushing out from a particular coproduct of a set of generating trivial cofibrations $\cJ$ as in step one of Quillen's small object argument:
\begin{equation}\label{genlifting}
\xymatrix{ \cdot \ar[d]_{\coprod\limits_{j \in \cJ} \coprod\limits_{\mathrm{Sq}(j,f)} j} \ar[r]  \ar@{}[dr]|(.8){\ulcorner} & \cdot \ar[d]^{Lf} \ar@{=}[r] & \cdot \ar[d]^f \\ \cdot \ar[r] & \cdot \ar[r]_{Rf} & \cdot}
\end{equation}
By the universal property of the defining pushout, an $R$-algebra structure for $f$ is precisely a \emph{lifting function} $\Phi_f$, i.e., a choice of solution to all lifting problems against any $j \in \cJ$; see \cite[2.25]{riehlalgebraic}. 
\[ \vcenter{\xymatrix{ \cdot \ar[d]_{\coprod\limits_{j \in \cJ} \coprod\limits_{\mathrm{Sq}(j,f)} j} \ar[r]  \ar@{}[dr]|(.8){\ulcorner} & \cdot \ar[d]^{Lf} \ar@{=}[r] & \cdot \ar[d]^f \\ \cdot \ar[r] & \cdot \ar[r]_{Rf} \ar@{-->}[ur]_{\Phi_f} & \cdot}}\qquad\leftrightsquigarrow\qquad\vcenter{ \xymatrix{ \cdot \ar[d]_{\coprod\limits_{j \in \cJ} \coprod\limits_{\mathrm{Sq}(j,f)} j} \ar[r]  & \cdot \ar[d]^f \\ \cdot \ar[r] \ar@{-->}[ur]_{\Phi_f} & \cdot}} \]

Furthermore, a map of $R$-algebras $(f,s) \to (f',s')$ is precisely a commutative square from $f$ to $f'$ that respects the chosen lifts. 

The category $\mathbf{Alg}_R$ admits a vertical composition law.  The $R$-algebra structure assigned to the composite of $(f,\Phi_f), (g,\Phi_g) \in \mathbf{Alg}_R$ is the lifting function that solves \begin{equation}\label{eq:vertcomp} \xymatrix{ A \ar[dd]_j \ar[r]^a & X \ar[d]^f \\ & Y \ar[d]^g \\ B \ar[r]_b \ar@{..>}[ur] \ar@{-->}[uur] & Z}\end{equation} by first constructing the dotted lift according to $\Phi_g$, thereby obtaining a new lifting problem against $f$ whose dashed solution is chosen according to $\Phi_f$. It is easy to check that this composition law respects morphisms of $R$-algebras and is associative.
\end{example}


The category $\mathbf{Alg}_R$ of Example \ref{ex:cgcomposition} is isomorphic to a category of the following form. For a class of morphisms $\cJ \subset \cC^{\mathbf{2}}$, define a category $\cJ^\boxslash$ in which an object is an arrow $f$ of $\cC$ equipped with a lifting function $\Phi_f$ and whose morphisms $(u,v) \co (f, \Phi_f) \to (f',\Phi_{f'})$ are commutative squares so that the triangle of lifts displayed below commutes. \begin{equation}\label{eq:boxslashmap} \xymatrix{ A \ar[d]_j \ar[r]^a & X \ar[d]^(.65)f \ar[r]^u & X' \ar[d]^{f'} \\ B \ar@{-->}[ur] \ar[r]_b \ar@{-->}[urr] & Y \ar[r]_v & Y'}\end{equation} 
This definition can be extended to the case where $\cJ \hookrightarrow \cC^{\mathbf{2}}$ is a (typically non-full) sub\emph{category} of the arrow category. In this case, the lifts specified by a lifting function $\Phi_f$ must be natural with respect to morphisms $j' \to j \in \cJ$ in the sense that the following diagram of lifts commutes \[ \xymatrix{ A' \ar[r] \ar[d]_{j'} & A \ar[d]_(.3)j \ar[r] & X \ar[d]^f \\ B' \ar[r] \ar@{-->}[urr] & B \ar@{-->}[ur] \ar[r] & Y}\] 

Note there is a natural forgetful functor $\cJ^\boxslash \to \cC^{\mathbf{2}}$. The following proposition is easy to verify \cite[2.32]{riehlalgebraic}.

\begin{prop}\label{prop:boxslashcomp} The category $\cJ^\boxslash$ is equipped with a natural vertical composition law as displayed in \eqref{eq:vertcomp}.
\end{prop}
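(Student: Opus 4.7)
The plan is to construct the composition law explicitly from the two-step lifting procedure illustrated in \eqref{eq:vertcomp} and then verify all the required axioms by direct diagram chase. Given $(f,\Phi_f)$ and $(g,\Phi_g)$ in $\cJ^\boxslash$ with $\cod f = \dom g$, I define $\Phi_g \bullet \Phi_f$ on a lifting problem $(a,b) \co j \to gf$, for $j \co A \to B \in \cJ$, in two stages: first apply $\Phi_g$ to the induced square $(fa,b)\co j \to g$ to obtain $\sigma \co B \to Y$ satisfying $\sigma j = fa$ and $g\sigma = b$; then apply $\Phi_f$ to the square $(a,\sigma)\co j \to f$ to obtain $\tau \co B \to X$ with $\tau j = a$ and $f\tau = \sigma$. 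Setting $(\Phi_g \bullet \Phi_f)(a,b) := \tau$ is well-defined and solves the composite lifting problem since $gf\tau = g\sigma = b$.

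First I would verify that $\Phi_g \bullet \Phi_f$ satisfies the naturality condition required to be a lifting function, i.e., that its specified lifts are natural with respect to morphisms $j' \to j$ in $\cJ$. This is immediate: the first-stage lift is natural in $\cJ$ because $\Phi_g$ is, and, feeding the result into $\Phi_f$, the second-stage lift inherits naturality from $\Phi_f$. Next I would check associativity. Given a third algebra $(h,\Phi_h)$ with $\cod g = \dom h$ and a lifting problem against $hgf$, both bracketings $(\Phi_h \bullet \Phi_g) \bullet \Phi_f$ and $\Phi_h \bullet (\Phi_g \bullet \Phi_f)$ unfold into the same three-stage process: lift successively against $h$, then $g$, then $f$. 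Expanding the definitions shows the two procedures produce identical lifts at each stage.

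Next I would verify condition (ii), that vertical composition of $\cJ^\boxslash$-morphisms yields a $\cJ^\boxslash$-morphism. Given $(u,v)\co (f,\Phi_f) \to (f',\Phi_{f'})$ and $(v,w)\co (g,\Phi_g) \to (g',\Phi_{g'})$, the composite square is $(u,w) \co gf \to g'f'$. For a lifting problem $(a,b)\co j \to gf$ I must show $u \cdot (\Phi_g \bullet \Phi_f)(a,b) = (\Phi_{g'} \bullet \Phi_{f'})(ua, wb)$. Writing $\sigma = \Phi_g(fa,b)$ and $\sigma' = \Phi_{g'}(f'ua, wb) = \Phi_{g'}(vfa, wb)$, the assumption that $(v,w)$ is a morphism of $\cJ^\boxslash$ gives $v\sigma = \sigma'$. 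Then $(u,v)$ being a morphism of $\cJ^\boxslash$ applied to the square $(a,\sigma)\co j \to f$ yields $u \cdot \Phi_f(a,\sigma) = \Phi_{f'}(ua, v\sigma) = \Phi_{f'}(ua,\sigma')$, which is exactly the required identity.

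None of this is especially subtle; the only mild obstacle is bookkeeping, making sure the naturality of the lifting functions in $\cJ$ (not just in $\cC^{\mathbf 2}$) is respected and that the morphism squares are transported through the two-stage construction in the correct order. Once the definition is set up cleanly, every axiom reduces to a one- or two-line diagram chase, so the proof will be short and entirely formal.
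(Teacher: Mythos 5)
Your proof is correct and follows exactly the procedure the paper sketches in \eqref{eq:vertcomp}: lift against the upper map first, feed the result to the lower lifting function, and then verify associativity, naturality in $\cJ$, and compatibility with $\cJ^\boxslash$-morphisms by unwinding the two-stage construction. The paper itself simply declares this ``easy to verify'' and defers to \cite[2.32]{riehlalgebraic}, so your argument supplies the omitted bookkeeping along the intended lines.
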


\begin{rmk}
For a generic functorial factorization, there is no reason for there to be a composition law for algebras of the right factor. However, we will see shortly that the existence of such a composition law is characteristic for functorial factorizations with good lifting properties. 
\end{rmk}

\subsection{Algebraic weak factorization systems}

The following definition is originally due to \cite{gtnatural}, with a small modification by Garner \cite{garnerunderstanding}.

\begin{defn}
An \emph{algebraic weak factorization system} on a category $\cC$ is pair $(\bbL, \bbR)$ with $\bbL = (L, \vec{\epsilon}, \vec{\delta})$ a comonad on $\cC^{\mathbf{2}}$ and $\bbR = (R, \vec{\eta}, \vec{\mu})$ a monad on $\cC^{\mathbf{2}}$ such that:
\begin{itemize}
 \item[(i)] $(L, \vec{\epsilon}), (R, \vec{\eta})$ give a functorial factorization on $\cC$, and
 \item[(ii)] The natural transformation $\Delta\co LR \to RL$ with components given by the commutative squares 
 \[
 \xymatrix{\cdot \ar[r]^{\delta_f} \ar[d]_{LRf} & \cdot \ar[d]^{RLf} \\
 \cdot \ar[r]_{\mu_f} & \cdot
 }
 \]
 is a distributive law, i.e., satisfies $\delta \circ \mu = \mu_L \circ E(\delta, \mu) \circ \delta_R$.
\end{itemize}
\end{defn}
It follows from (i) that $\cod R = \cod$ and that the codomain components of both $\vec{\mu}$ and $\vec{\eta}$ are the identity; dually, $\dom L = \dom$ and the domain components of $\vec{\delta}$ and $\vec{\epsilon}$ are identities. In other words, $\bbR$ is a monad over the functor $\cod$, and dually for $\bbL$.

\begin{defn}
The \emph{left class} of an algebraic weak factorization system $(\L,\R)$ is the class of maps that admit an $L$-coalgebra structure while the \emph{right class} is the class of maps that admit an $R$-algebra structure.
\end{defn}

Equivalently, the left class is the retract closure of the class of $\L$-coalgebras and the right class is the retract closure of the class of $\R$-algebras. Note by Lemma \ref{lem:lifting}, each map in the left class lifts against every map in the right class.

\begin{lemma}[Garner]\label{lem:recognition} If $(\L,\R)$ is an algebraic weak factorization system, then $\mathbf{Alg}_\R$ has a canonical vertical composition law.
\end{lemma}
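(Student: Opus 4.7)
Given vertically composable $\bbR$-algebras $(f, s)$ with $f \co X \to Y$ and $(g, t)$ with $g \co Y \to Z$, my plan is to define an $\bbR$-algebra structure on $gf$ by invoking the canonical lift supplied by Lemma \ref{lem:lifting}. The morphism $(f, 1_Z) \co gf \to g$ of $\cC^{\mathbf{2}}$ yields $E(f, 1_Z) \co E(gf) \to Eg$ by functoriality, and the square
\[ (1_X, t \cdot E(f, 1_Z)) \co L(gf) \to f \]
commutes because $t \cdot E(f, 1_Z) \cdot L(gf) = t \cdot Lg \cdot f = 1_Y \cdot f = f$, using naturality of $\vec{\epsilon}$ at $(f, 1_Z)$ together with the unit axiom for the $\bbR$-algebra $t$. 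Viewing $L(gf)$ as a free $\bbL$-coalgebra via the comultiplication $\delta_{gf}$, Lemma \ref{lem:lifting} furnishes a canonical diagonal filler for this lifting problem, which I take to be $t \bullet s$. Writing $\delta_{gf} \co E(gf) \to E(L(gf))$ for the non-trivial codomain component of the comultiplication, the explicit formula is
\[ t \bullet s \;=\; s \cdot E(1_X, t \cdot E(f, 1_Z)) \cdot \delta_{gf}. \]

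I would then verify the two monad-algebra axioms for $t \bullet s$. The unit axiom $(t \bullet s) \cdot L(gf) = 1_X$ reduces quickly to $s \cdot Lf = 1_X$ by invoking naturality of $\vec{\delta}$ and the counit law of the comonad $\bbL$. The associativity axiom---$(t \bullet s) \cdot \mu_{gf} = (t \bullet s) \cdot E(t \bullet s, 1_Z)$---is the main obstacle, and it is precisely here that the distributive law $\Delta \co LR \to RL$ is used: once both sides are expanded via the defining formula for $\bullet$, the axiom $\delta \circ \mu = \mu_L \circ E(\delta, \mu) \circ \delta_R$ mediates between the comultiplication appearing in the definition of $\bullet$ and the multiplication of $\bbR$, and in combination with the individual associativity axioms for $s$ and $t$ yields the required identity.

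Finally, it remains to check the two clauses of Definition \ref{def:complaw}. Associativity of the operation $\bullet$ for three vertically composable $\bbR$-algebras follows by a diagram chase parallel to the one just described, exploiting the functoriality of $E$ on composable pairs together with the coassociativity of $\vec{\delta}$. Condition (ii), that the vertical composite of two $\bbR$-algebra maps is again an $\bbR$-algebra map, is essentially formal given the explicit formula for $t \bullet s$: the commutativities encoded in the $\bbR$-algebra maps $(u, v) \co (f, s) \to (f', s')$ and $(v, w) \co (g, t) \to (g', t')$ propagate through the naturality squares of $\vec{\delta}$ and of $E(-,-)$, showing that $(u, w)$ intertwines $t \bullet s$ with $t' \bullet s'$.
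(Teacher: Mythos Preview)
The paper does not give its own proof of this lemma; it simply cites \cite{garnerunderstanding}. So there is no in-paper argument to compare against directly. That said, your construction is exactly the standard one: the formula $t \bullet s = s \cdot E(1_X, t \cdot E(f,1_Z)) \cdot \delta_{gf}$ that you derive via Lemma~\ref{lem:lifting} coincides with the composition law from \cite[2.21]{riehlalgebraic}, which the paper invokes later in its proof of Theorem~\ref{thm:recognition}. Indeed, unwinding the composite displayed there, \[ s \cdot E(1,t) \cdot E(1,E(f,1)) \cdot \delta_{gf} = s \cdot E(1, t \cdot E(f,1)) \cdot \delta_{gf}, \] matches your expression on the nose.

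Your verification of the unit axiom is correct, and your identification of the distributive law as the key ingredient for the $\bbR$-algebra associativity axiom is accurate. The two places where your sketch is thinnest---checking $(t \bullet s) \cdot \mu_{gf} = (t \bullet s) \cdot E(t \bullet s, 1_Z)$ and checking associativity of $\bullet$ itself---are genuinely the laborious parts, and ``diagram chase using the distributive law and coassociativity'' is the right summary, but if you intend to write this out in full you should expect each to require several steps of careful bookkeeping with naturality of $\vec{\delta}$, $\vec{\mu}$, and $E$. One minor omission: you do not mention verifying the lower triangle $gf \cdot (t \bullet s) = R(gf)$, though this is routine.
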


A proof is given in \cite{garnerunderstanding}. We are particularly interested in the converse.

\begin{thm}[Garner]\label{thm:recognition}
If $\R$ is a monad on $\cC^{\mathbf{2}}$ over $\cod$ such that its category of algebras $\mathbf{Alg}_\R$ admits a vertical composition law, then there is a canonical algebraic weak factorization system $(\L,\R)$, with the functor $L \co \cC^{\mathbf{2}} \to \cC^{\mathbf{2}}$  defined by the unit. Furthermore,  the vertical composition law on $\mathbf{Alg}_\R$ determined by the algebraic weak factorization system $(\L,\R)$ coincides with the hypothesized one.
\end{thm}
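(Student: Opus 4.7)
The functor $L : \cC^{\mathbf{2}} \to \cC^{\mathbf{2}}$ and the counit $\vec{\epsilon}: L \to \id$ are already determined by $\R$: since $\R$ is over $\cod$, the codomain component of each unit square $\vec{\eta}_f : f \to Rf$ is $1_{\cod f}$, so its domain component defines $Lf : \dom f \to Ef$, and $\vec{\epsilon}_f = (1_{\dom f}, Rf)$. It remains to construct a comultiplication $\vec{\delta}: L \to LL$ making $\bbL = (L, \vec{\epsilon}, \vec{\delta})$ a comonad, to verify the distributive law between $\bbL$ and $\R$, and finally to show that the composition law on $\mathbf{Alg}_\R$ induced by $(\bbL, \R)$ via Lemma \ref{lem:recognition} recovers the hypothesized one.

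The crux is the construction of $\vec{\delta}$. For each $f$, the free $R$-algebras $(RLf, \mu_{Lf})$ and $(Rf, \mu_f)$ form a composable pair, so clause (i) of Definition \ref{def:complaw} furnishes a canonical $R$-algebra structure $\mu_f \bullet \mu_{Lf}$ on $Rf \cdot RLf : ELf \to Y$. I would define the codomain component $\delta_f : Ef \to ELf$ of $\vec{\delta}_f$ by
\[
\delta_f \;=\; (\mu_f \bullet \mu_{Lf}) \cdot E(LLf, 1_Y),
\]
where $(LLf, 1_Y) : f \to Rf \cdot RLf$ is a morphism in $\cC^{\mathbf{2}}$ (it commutes since $Rf \cdot RLf \cdot LLf = Rf \cdot Lf = f$), and the domain component of $\vec{\delta}_f$ is forced to be $1_{\dom f}$. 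Naturality of $\vec{\delta}$ in $f$ will follow from functoriality of $E$ together with clause (ii) of Definition \ref{def:complaw}.

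To check the comonad axioms, I would first isolate a lemma: the projection square $(RLf, 1_Y)$ defines an $R$-algebra map $(Rf \cdot RLf, \mu_f \bullet \mu_{Lf}) \to (Rf, \mu_f)$; this is not literally part of Definition \ref{def:complaw} but should fall out by uniquely characterizing $\mu_f \bullet \mu_{Lf}$ via the composition law applied to suitable $R$-algebra maps. Granted this, the first counit identity $RLf \cdot \delta_f = 1_{Ef}$ unfolds as
\[
RLf \cdot (\mu_f \bullet \mu_{Lf}) \cdot E(LLf,1_Y) \;=\; \mu_f \cdot E(RLf,1_Y) \cdot E(LLf, 1_Y) \;=\; \mu_f \cdot E(Lf, 1_Y),
\]
which equals $1_{Ef}$ by the unit law of $\R$ applied to $\vec{\eta}_f = (Lf, 1_Y)$. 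The second counit identity and coassociativity would be verified by analogous manipulations, the latter invoking clause (i) applied to the triple $(RLLf, \mu_{LLf}), (RLf, \mu_{Lf}), (Rf, \mu_f)$. The distributive law between $\bbL$ and $\R$ would follow from a diagram chase in which both sides are expressed as composites of free $R$-algebras.

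Finally, to match the induced composition law with the hypothesized one, I would unwind the proof of Lemma \ref{lem:recognition}: for composable $(f, s), (g, t)$ in $\mathbf{Alg}_\R$, the induced composite algebra structure solves a lifting problem whose lift is constructed from $s,t$ by pulling back along $\vec{\delta}_{gf}$. By the defining formula for $\delta_{gf}$ and naturality of everything in sight, this recovers exactly $t \bullet s$. The main obstacle, I expect, is the preliminary lemma about the $R$-algebra map $(RLf, 1_Y)$ (and its analogues needed for the second counit law and coassociativity): extracting these from the bare axioms of the composition law requires care, and is the categorical content that makes the chosen formula for $\delta_f$ behave as desired.
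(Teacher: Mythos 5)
Your proposal follows essentially the same route as the paper: you recover the same formula $\delta_f = (\mu_f\bullet\mu_{Lf})\cdot E(L^2f,1)$ for the comultiplication, you get naturality from functoriality of $E$ plus clause (ii) of Definition \ref{def:complaw}, exactly as the paper does, and you unwind the induced composition law from the defining formula. The one place where your approach genuinely diverges is the verification of the comonad axioms. The paper exploits the monadic adjunction $\cC^{\mathbf{2}}\rightleftarrows\mathbf{Alg}_\R$: since $\delta_f$ is by construction (the domain component of) the transpose of the map $(L^2f,1)\co f\to Rf\cdot RLf$, coassociativity reduces to comparing transposes and checking that their precompositions with the unit agree, which is a one-line identity. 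You instead propose to verify the counit and coassociativity identities by direct computation, hinging on the preliminary claim that $(RLf,1_Y)$ is an $R$-algebra map $(Rf\cdot RLf,\,\mu_f\bullet\mu_{Lf})\to(Rf,\mu_f)$. You correctly flag this as the delicate point. Trying to extract it from Example \ref{ex:identityRalgebramap} together with clause (ii) gets you an $R$-algebra map into $(Rf\cdot 1_{Ef},\,\mu_f\bullet R1_{Ef})$, and you then need $\mu_f\bullet R1_{Ef}=\mu_f$ --- a vertical unit law that Definition \ref{def:complaw} as written does not actually state (it only requires associativity), even though the remark that $\mathbf{Alg}_R$ becomes a double category suggests it is intended. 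So your plan either implicitly uses an unstated unit axiom or needs a separate argument at that step; the paper's use of the adjunction is precisely what lets it avoid this lemma altogether and establish the needed algebra maps by transposition rather than by manipulating the composition law.
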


Partial proofs can be found in \cite{garnerhomomorphisms,riehlalgebraic}, but  we felt that a more fleshed-out treatment was merited.

\begin{proof}
We make frequent use of the monadic adjunction $\cC^{\mathbf{2}} \rightleftarrows \mathbf{Alg}_\R$. The (non-trivial component of the comultiplication) $\vec{\delta}_f \co Lf \to L^2f$ is the domain component of the adjunct to the map \[ \xymatrix{ X \ar[dd]_f \ar[r]^{L^2f} & ELf \ar[d]^{RLf} \\ & Ef \ar[d]^{Rf} \\ Y \ar@{=}[r] & Y}\] Explicitly, $\delta$ is the composite of $E(L^2f,1)$ with the algebra structure assigned the composite of the free algebras $RLf$ and $Rf$. Because arbitrary maps $(u,v) \co f \to g$ give rise to maps $(E(u,v),v) \co Rf \to Rg$ of free $\R$-algebras, $\delta \co E \to EL$ is a natural transformation.

It remains to show that $\vec{\delta}$ gives $L$ the structure of a comonad in such a way that $(\L,\R)$ is an algebraic weak factorization system. We will check coassociativity and leave the unit and distributivity axioms to the reader.  

To this end, note that the following rectangles are maps of $\R$-algebras 
\[
\xymatrix{ Ef \ar[dd]_{Rf} \ar[r]^{\delta_f} & ELf \ar[d]_{RLf} \ar[r]^{E(1,\delta_f)} & EL^2f \ar[d]^{RL^2f} \\ & Ef \ar[r]^{\delta_f} \ar[d]_{Rf} & ELf \ar[d]^{Rf \cdot RLf} \\ Y \ar@{=}[r] & Y \ar@{=}[r] & Y} \qquad \xymatrix{ Ef \ar[dd]_{Rf} \ar[r]^-{\delta_f} & ELf \ar[r]^{\delta_{Lf}} \ar[d]_{RLf} & EL^2f \ar[d]^{RLf \cdot RL^2f} \\ & Ef \ar[d]_{Rf} \ar@{=}[r] & Ef \ar[d]^{Rf} \\ Y \ar@{=}[r] & \ar@{=}[r] & }
\]
We will show that the domain components agree by transposing both maps across the monadic adjunction. The domain component of the transpose of the left-hand map is 
 $E(1,\delta) \cdot L^2= L^3 = \delta_L \cdot L^2$, which is the domain component of the transpose of the right-hand map.  Hence $\delta$ is coassociative.

Finally, we verify that the vertical composition law arising from the algebraic weak factorization system by Lemma \ref{lem:recognition} agrees with the vertical composition we started with. The key observation is that for any composable pair of $\R$-algebras $(f,s)$ and $(g,t)$ we have the following map of $\R$-algebras:
\[
\xymatrix@C=40pt{ ELgf \ar[d]_{RLgf} \ar[r]^-{E(1,E(f,1))} & E(Lg \cdot f) \ar[d]_{R(Lg \cdot f)} \ar[r]^-{E(1,t)} & Ef \ar[r]^-s \ar[d]_{Rf} & X \ar[d]^f \\ Egf \ar[d]_{Rgf} \ar[r]^-{E(f,1)} & Eg \ar[d]_{Rg} \ar[r]^t & Y \ar@{=}[r] \ar[d]_g & Y \ar[d]^g \\ Z \ar@{=}[r] & Z \ar@{=}[r] & Z \ar@{=}[r] & Z}
\]
Recall $\delta_{gf}$ was defined to be $\mu_{gf} \bullet \mu_{Lgf} \cdot E(L^2{gf},1)$, where $\bullet$ is the given vertical composition law. By contrast, we write $\bullet'$ for the vertical composition given by the algebraic weak factorization system; by \cite[2.21]{riehlalgebraic}, $t \bullet' s$ is defined to be the composite \[ \xymatrix@C=40pt{ \cdot \ar[r]^{E(L^2gf,1)} & \cdot \ar[r]^{\mu_{gf} \bullet \mu_{Lgf}} & \cdot \ar[r]^{E(1,E(f,1))} & \cdot \ar[r]^{E(1,t)} & \cdot \ar[r]^s & \cdot}\] Because the above pasted rectangle is a map of $R$-algebras, the composite of the last four arrows is $t \bullet s \cdot E(s \cdot E(1,t) \cdot E(1,E(f,1)), 1)$. Precomposing with $E(L^2gf,1)$, we have a commutative diagram \[ \xymatrix@C=50pt{ \cdot \ar[d]_{E(L^2gf,1)} \ar[dr]|{E(L(Lg \cdot f),1)} \ar@/^/[drr]|(.6){E(Lf,1)} \ar@/^1.5pc/@{=}[drrr] \\ \cdot \ar[r]_{E(E(1, E(f,1)),1)} & \cdot \ar[r]_{E(E(1,t),1)} & \cdot \ar[r]_{E(s,1)} & \cdot \ar[r]_{t \bullet s} & \cdot}\] Hence $t\bullet' s = t \bullet s$.
\end{proof}

\subsection{The Moore paths algebraic weak factorization system}

We now use these results to show that the functorial factorization (\ref{moorefact}) is in fact an algebraic weak factorization system. This was noticed independently by Garner.

To this end, we must explain how define a vertical composition law for the category of $\M$-algebras. An $M$-algebra structure is classically called a \emph{path lifting function}. The function $\xi \co \Gamma f \to X$ specifying an $M$-algebra structure for $f \co X \to Y$ maps a Moore path $(p \co [0,t] \to Y, x \in X_{p(0)})$ to a point $\xi(p,t,x) \in X_{p(t)}$.  If $\xi$ is an $\M$-algebra structure, then this assignment must satisfy an additional ``transitivity'' condition; see Remark \ref{rmk:transitivity} below.

We might hope to use a procedure similar to the one outlined in Example \ref{ex:cgcomposition}. Suppose $g \co Y \to Z$, $\zeta \co \Gamma g \to Y$ is a second $M$-algebra. We can use $\zeta$ to lift the endpoint of a Moore path $(p \co [0,t] \to Z, x \in X_{p(0)})$ to $Y$, but we have lost too much information to proceed any further. 

The key idea is that an $M$-algebra structure determines a lift, displayed in the lemma below, that might be called a \emph{parametrized path lifting function}.

\begin{lemma}\label{lem:pathlifting}
There is an isomorphism, over $\cC^{\mathbf{2}}$,  between the category $\mathbf{Alg}_M$ and the category of arrows $f$ equipped with lifts
\begin{equation}\label{eq:pathliftingfunction}
\xymatrix{ \Gamma f \ar[d]_{i_0} \ar[rr] & & X \ar[d]^f \\ \Gamma f \times \R^+ \ar[r] \ar@{-->}[urr] & \Pi Y \times \R^+ \ar[r]_-{\ev} & Y}
\end{equation}
\end{lemma}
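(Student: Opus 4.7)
The plan is to define explicit, mutually inverse assignments between $M$-algebra structures on $f$ and lifts in~\eqref{eq:pathliftingfunction}, and then to show that they are strictly natural in $f \in \cC^{\mathbf{2}}$ so that they assemble into an isomorphism of categories over $\cC^{\mathbf{2}}$.

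The key ingredient is a natural \emph{truncation} map $\sigma_f \co \Gamma f \times \R^+ \to \Gamma f$ sending $((p,t,x),r)$ to $(p^{(r)},r,x)$, where $p^{(r)}$ is the Moore path of length $r$ obtained by restricting $p$ to $[0,\min(r,t)]$ and, when $r > t$, extending by the constant value $p(t)$ on $[\min(r,t),r]$. This triple lies in $\Gamma f$ since $p^{(r)}(0) = p(0) = f(x)$. Three identities are immediate from the definitions: (i) $\sigma_f \circ i_0$ equals the canonical projection $\Gamma f \to X$ followed by $If$, since both send $(p,t,x)$ to $(\mathrm{const}_{f(x)},0,x)$; (ii) $Mf \circ \sigma_f$ equals the lower composite in~\eqref{eq:pathliftingfunction}, since both evaluate to $p(\min(r,t))$; and (iii) $\sigma_f \circ j_f = \id_{\Gamma f}$, where $j_f \co \Gamma f \to \Gamma f \times \R^+$ sends $(p,t,x)$ to $((p,t,x),t)$, because truncating a path at its own length does nothing.

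First I would define the two assignments. Given an $M$-algebra structure $s$---a lift as in Lemma~\ref{lem:Ralgebradefinition}---set $\lambda_s := s \circ \sigma_f$; identities (i) and (ii) together with the algebra axioms $s \cdot If = \id_X$ and $f \cdot s = Mf$ force the upper and lower triangles of~\eqref{eq:pathliftingfunction} to commute. Conversely, given a lift $\lambda$, set $s_\lambda := \lambda \circ j_f$; commutativity of the upper triangle specialized along $j_f \circ If = i_0 \circ If$ yields $s_\lambda \cdot If = \id_X$, and commutativity of the lower triangle specialized along $j_f$ yields $f \cdot s_\lambda = Mf$, so $s_\lambda$ is an $M$-algebra structure. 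Identity (iii) gives $s_{\lambda_s} = s$ directly; the reverse composite $\lambda \mapsto s_\lambda \mapsto \lambda_{s_\lambda}$ is $\lambda \circ j_f \circ \sigma_f$, which returns $\lambda$ because a parametrized lift of the displayed square is determined by its restriction along $j_f$ together with the truncation structure encoded by $\sigma_f$.

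Naturality in $(u,v) \co f \to f'$ is automatic since both $\sigma$ and $j$ are strictly natural in the variable $f \in \cC^{\mathbf{2}}$; hence $(u,v)$ is a morphism of $M$-algebras in the sense of Definition~\ref{defn:mapRalg} if and only if it is a morphism of the corresponding parametrized lifts. The main obstacle is precisely the recovery identity $\lambda_{s_\lambda} = \lambda$: the forward assignment and the round trip $s \mapsto \lambda_s \mapsto s$ follow formally from the three identities above, but the reverse round trip is the step where the particular geometry of the Moore path space---and its universal ``restrict-to-subinterval'' operation encoded by $\sigma_f$---does the essential work; everything else is bookkeeping.
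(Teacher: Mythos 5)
Your forward and backward assignments, $\lambda_\xi = \xi\circ\sigma_f$ and $\xi_\lambda = \lambda\circ j_f$, are exactly the paper's, though the paper arrives at $\lambda_\xi$ more categorically: it notes that $i_0\co\Gamma f\to\Gamma f\times\R^+$ carries an $I$-coalgebra structure and obtains $\lambda_\xi$ as the canonical lift of Lemma~\ref{lem:lifting}. Your explicit $\sigma_f$ unwinds the same formula, and your verification of the two triangles and of the retraction identity $\xi_{\lambda_\xi}=\xi$ via $\sigma_f\circ j_f=\id$ is correct and matches what the paper needs.

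The genuine gap is in the other round trip, and you correctly located it but did not close it. You compute $\lambda_{\xi_\lambda}=\lambda\circ j_f\circ\sigma_f$, and $j_f\circ\sigma_f$ sends $\bigl((p,t,x),s\bigr)$ to $\bigl((p^{(s)},s,x),s\bigr)$, which is \emph{not} the identity. The assertion that this ``returns $\lambda$'' asserts precisely what needs to be proved, namely that $\lambda$ only sees a Moore path through its truncation at the given parameter; nothing in the square~\eqref{eq:pathliftingfunction} forces this. A quick counterexample: take $f\co X\to\ast$ with $X=\R$, so that $\Gamma f\cong\R^+\times X$ and both triangles degenerate to the single condition $\lambda(t,x,0)=x$. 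Then $\lambda(t,x,s):=x+ts$ is a valid lift, but $\xi_\lambda(t,x)=x+t^2$, so $\lambda_{\xi_\lambda}(t,x,s)=x+s^2\ne\lambda(t,x,s)$. Hence $\xi\mapsto\lambda_\xi$ is a section but not a bijection onto all lifts, and the two lift-objects above are not even isomorphic in the lift category, so this cannot be repaired by passing to an equivalence. To make the statement honest you would need to restrict the lift category to those $\lambda$ that factor through $\sigma_f$ (equivalently, lie in the image of $\xi\mapsto\xi\circ\sigma_f$), or weaken ``isomorphism'' to a retract inclusion; either suffices for the vertical-composition proposition that follows. For what it is worth, the paper's own proof is silent on exactly this step as well, so you reproduced its content faithfully; the point is that the hand-wave you wrote is not a proof and, as stated, the identity is false.
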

\begin{proof}
Clearly a parametrized path lifting function determines a path lifting function.  For the converse, first note that for any space $A$, the map $i_0 \co A \to A \times \R^+$ admits the structure of an $I$-coalgebra: The required lift $A \times \R^+ \to \Gamma i_0$ sends a point $(a,t) \in A \times \R^+$ to the path $r \mapsto (a,r)$ of length $t$ with fiber point $a$. Using this, we define the parametrized path lifting function to be the canonical lift of the $I$-coalgebra $i_0$ against the $M$-algebra $f$ obtained from the functorial factorization $(I,M)$, as in Lemma \ref{lem:lifting}. 

Explicitly, the diagonal arrow maps a pair consisting of a Moore path $(p \co [0,t] \to Y, x \in X_{p(0)})$ together with a parameter $s$ to the value of $\xi$ on the Moore path $(p \co [0,s] \to Y, x \in X_{p(0)})$.\footnote{If $s \le t$, this new $p$ is the restriction of the old one; if $s > t$, the new $p$ extends the old by remaining constant at $p(t)$ for the necessary duration.}
\end{proof}

\begin{rmk}\label{rmk:transitivity}
As detailed in \cite[3.2]{mayclassifying}, if $\xi$ is an $\M$-algebra, then the associated map (\ref{eq:pathliftingfunction}) is a \emph{transitive} parametrized path lifting function, which means that the lifted paths respect concatenation of paths in the following sense. If $p$ and $p'$ are composable paths of length $t$ and $t'$, and $x$ is in the fiber over $p(0)$, then the lift of the concatenated path agrees with the concatenation of the lift of the first path followed by the lift of the second path starting at $\xi(p,t,x)$. In this way, there is an isomorphism between $\mathbf{Alg}_\M$ and the category of arrows equipped with transitive parametrized path lifting functions.
\end{rmk}

\begin{prop} The category $\mathbf{Alg}_\M$ admits a vertical composition law.
\end{prop}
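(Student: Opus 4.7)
The plan is to exploit the identification of $\mathbf{Alg}_\M$ with arrows equipped with transitive parametrized path lifting functions, given by Lemma \ref{lem:pathlifting} and Remark \ref{rmk:transitivity}. From a parametrized lifting function $\tilde\xi \co \Gamma f \times \R^+ \to X$ one extracts the ``full path lift'' $\hat\xi \co \Gamma f \to X^{\R^+}$ by $\hat\xi(p,t,x)(s) := \tilde\xi((p,t,x),s)$; this is a path in $X$ starting at $x$ that covers $p$ via $f$ and ends at $\xi(p,t,x)$, and transitivity of $\xi$ translates into compatibility of $\hat\xi$ with concatenation of Moore paths.

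Given composable $\M$-algebras $(f\co X \to Y, \xi)$ and $(g\co Y \to Z, \zeta)$, I would define the vertical composite algebra structure on $gf$ by the two-step prescription
\[
(\zeta \bullet \xi)(p,t,x) \;:=\; \xi\bigl(\hat\zeta(p,t,f(x)),\,t,\,x\bigr),
\]
that is, first use $\hat\zeta$ to lift the path $p$ in $Z$ along $g$ to a Moore path in $Y$ starting at $f(x)$ (this makes sense because $gf(x) = p(0)$), then use $\xi$ to lift this new Moore path in $Y$ along $f$ starting at $x$. All fiber conditions match, the output lies in the $(gf)$-fiber over $p(t)$, and continuity is inherited from that of $\xi$, $\hat\zeta$, and $f$.

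Three verifications then remain on the algebra side. The unit axiom for $\zeta\bullet\xi$ as an $M$-algebra is immediate, since both $\hat\zeta$ and $\xi$ return the chosen fiber point on a length-zero path. Transitivity is a short calculation: for concatenable Moore paths $p_1,p_2$ in $Z$, transitivity of $\zeta$ decomposes $\hat\zeta(p_1p_2,\,t_1+t_2,\,f(x))$ into $\hat\zeta(p_1,t_1,f(x))$ followed by $\hat\zeta(p_2,t_2,\zeta(p_1,t_1,f(x)))$, and then transitivity of $\xi$, together with the compatibility identity $\zeta(p_1,t_1,f(x)) = f((\zeta\bullet\xi)(p_1,t_1,x))$ that ensures the second leg has the right starting fiber, yields the transitivity identity for $\zeta\bullet\xi$. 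Associativity of $\bullet$ follows from the same pattern unpacked one level further using a triple path lift.

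The last step is to verify that a vertical composite of $\M$-algebra maps is again an $\M$-algebra map. Given $(u,v) \co (f,\xi) \to (f',\xi')$ and $(v,w) \co (g,\zeta) \to (g',\zeta')$, the morphism condition for $\zeta$ gives $v \circ \hat\zeta(p,t,f(x)) = \hat\zeta'(w \circ p,\,t,\,f'(u(x)))$, after which applying the morphism condition for $\xi$ intertwines $u$ with the composite structures, exhibiting $(u,w) \co (gf,\zeta\bullet\xi) \to (g'f',\zeta'\bullet\xi')$ as a morphism of algebras. The main obstacle throughout will be purely bookkeeping --- tracking fiber conditions and domain/codomain matching through the two-stage lift --- so the substantive content of the proof is just the existence of the full path lift furnished by Lemma \ref{lem:pathlifting} together with the transitivity of the two given algebras.
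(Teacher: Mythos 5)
Your construction is essentially identical to the paper's: both first use the parametrized path lifting structure for $(g,\zeta)$ (your $\hat\zeta$, obtained via Lemma~\ref{lem:pathlifting}) to produce a new Moore path in $Y$ covering $p$ and starting at $f(x)$, and then feed this new path together with $x$ into $\xi$. The paper phrases the two stages directly as the dotted and dashed lifts in the diagram of Lemma~\ref{lem:pathlifting} for the composite $gf$, while you present the same two-stage lift in the adjoint ``full path lift'' form $\hat\zeta\co\Gamma g \to Y^{\R^+}$; the resulting composite algebra structure and the remaining verifications match.
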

\begin{proof}
We explain how to compose the transitive path lifting functions associated to $\M$-algebras $(f,\xi)$ and $(g,\zeta)$ using the construction of Lemma \ref{lem:pathlifting}, i.e., we define a composite lift \[\xymatrix{ \Gamma (gf) \ar[dd]_{i_0} \ar[r]  &  X \ar[d]^f \\ & Y \ar[d]^g \\ \Gamma (gf) \times \R^+ \ar[r]_-{\mathrm{ev}} \ar@{-->}[uur]^{\zeta \bullet \xi} \ar@{..>}[ur] & Z}\]  The dotted lift sends a pair consisting of a Moore path $(p \co [0,t] \to Z, x \in X_{p(0)})$  and a parameter $s$ to the value of $\zeta$ on the Moore path $(p \co [0,s] \to Z, f(x) \in Y_{p(0)})$. This dotted map now allows us to define a new Moore path in $Y$: 
\[  r \mapsto \zeta(p,r,f(x)) \co [0,s] \to Y.\]  Call this path $\zeta(p)$. The point $x$ lies in the fiber over $\zeta(p)(0)$. Hence, $(\zeta(p), x) \in \Gamma f$. We define the dashed lift to be the map that sends our original Moore path $(p,t,x)$ and parameter $s$ to the point $\xi( \zeta(p), s, x)$. The remaining details are straightforward diagram chases, left to the reader.
\end{proof}

\begin{rmk}
One can wonder whether this proof applies to produce model structures in more general situations, i.e., for a category equipped with some kind of \emph{Moore path object}. An indication that this is indeed possible is given in \cite{garnertype}, who construct factorizations on so-called path categories. By weakening their axioms, Williamson \cite{williamsoncylindrical} obtains similar results in greater generality. 
\end{rmk}

\begin{rmk} 
Note that the only non-formal ingredient in the argument given here is the existence of a well-behaved Moore path object in topological spaces. Work in progress by Bill Richter is aimed at showing that the obvious analogue of \eqref{eq:Moorepullback} in a general category enriched, tensored, and cotensored over topological spaces has the same good properties. Our proof would then apply verbatim to yield a trivial cofibration -- fibration factorization for a Hurewicz-type model structure.

Our motivation for presenting a different, more abstract approach in the following sections stems mainly from its flexibility. Our algebraic methods apply to many contexts in which the fibrations are characterized by some generic lifting problem, but in which there exists no obvious analogue of Moore paths. For instance, this is the case for categories enriched, tensored, and cotensored over a category with an interval object. Examples include various model structures on dg-modules over a commutative differential graded algebra, as investigated by Peter May.
\end{rmk}

\section{Hurewicz model structures on topological categories}\label{hmodel}

Let $\cC$ be a \emph{topologically bicomplete} category, i.e., a bicomplete category enriched, tensored, and cotensored over some convenient category of spaces $\Top$. We will require one additional condition, akin to the ``smallness'' condition for Quillen's small object argument, which we will describe when we explain its purpose below. The tensor and cotensor structure suffices to abstract the definitions of homotopy equivalence, Hurewicz cofibration and Hurewicz fibration from Section \ref{strommodel}.

\subsection{Topological categories and Cole's construction}\label{cole}

In this section and the next we describe the heart of the construction in \cite{colemany}, set up the notation for the rest of the chapter and state some lemmata that will turn out to be useful in the proof of our main theorem.  \\

\begin{defn}
A \emph{homotopy} between two maps $f_0,f_1\co X \rightrightarrows Y$ in $\cC$ is a map $h\co X \otimes I \to Y$, or equivalently, a map $\widehat{h} \co X \to Y^I$ (its adjunct) such that 
\[ \vcenter{\xymatrix{ X \ar[d]_{i_0} \ar[dr]^{f_0} \\ X \otimes I \ar[r]^-h & Y \\ X \ar[u]^{i_1} \ar[ur]_{f_1}}} \qquad \mathrm{or~equivalently} \qquad \vcenter{\xymatrix{ & Y \\ X \ar[r]^-{\widehat{h}} \ar[ur]^{f_0} \ar[dr]_{f_1} & Y^I \ar[u]_{p_0} \ar[d]^{p_1} \\ & Y}} \] commutes, $i_0,i_1\co X \rightrightarrows X \otimes I$ and $p_0, p_1 \co Y^I \rightrightarrows Y$ being the morphisms induced by the two endpoint inclusions $* \rightrightarrows I$. 
\end{defn}

In particular, we have a notion of \emph{homotopy equivalence} in $\cC$.

\begin{defn}
A map $f$ in $\cC$ is an $h$-\emph{cofibration} if it has the left lifting property with respect to $p_0\co Z^I \to Z$ for all objects $Z \in \cC$. Dually, $f$ is an $h$-\emph{fibration} if it has the right lifting property with respect to all cylinder inclusions of the form $i_0\co Z \to Z \otimes I$. 
\end{defn}

Here the ``$h$'' stands for Hurewicz and also for homotopy. We would ideally like to construct a model structure on $\cC$ whose cofibrations are the $h$-cofibrations, whose fibrations are the $h$-fibrations, and whose weak equivalences are the homotopy equivalences. However, similarly to Section \ref{strommodel}, this is not possible because only some of the $h$-cofibrations lift against the class of $h$-fibrations that are also homotopy equivalences.

This motivates the following definition:

\begin{defn}
The class of \emph{strong cofibrations} is the class of maps that have the left lifting property with respect to the $h$-fibrations that are also homotopy equivalences.
\end{defn}

Because the maps $p_0 \co Z^I \to Z$ are homotopy equivalences and $h$-fibrations, cf. \cite{schwaenzlvogt}, strong cofibrations are in particular $h$-cofibrations. An immediate corollary of our main theorem, Theorem \ref{mainthm} below, establishes a so-called $h$-model structure, whose weak equivalences are homotopy equivalences, fibrations are $h$-fibrations, and cofibrations are the strong cofibrations. Henceforth, we use ``cofibrations'' and ``fibrations'' in the model structure sense, in particular dropping the ``$h$''.

It is possible to describe these right and left lifting classes using relative lifting properties \cite[4.2.2]{maysig}, 
but all we need is the following result. \\

\begin{lemma}\label{strongcof} $\quad$
 \begin{itemize}
  \item[(i)] The natural map $i_0\co A \to A \otimes I$ is a trivial cofibration for all objects $A \in \cC$.
  \item[(ii)] The class of (trivial) cofibrations is closed under retracts, pushouts and sequential colimits. 
 \end{itemize}
\end{lemma}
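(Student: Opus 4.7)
For (i), I would verify the two requirements independently. To see that $i_0 \co A \to A \otimes I$ is a homotopy equivalence, take the retraction $r \co A \otimes I \to A$ induced by the unique map $I \to \ast$: then $r \circ i_0 = \id_A$, and a homotopy from $\id_{A \otimes I}$ to $i_0 \circ r$ is given by $A$ tensored with any continuous $\mu \co I \times I \to I$ satisfying $\mu(t,0) = t$ and $\mu(t,1) = 0$ (for instance $\mu(t,s) = t(1-s)$), using the natural isomorphism $(A \otimes I) \otimes I \cong A \otimes (I \times I)$ coming from the topological enrichment. To see that $i_0$ is a cofibration, I would observe the stronger statement that it lifts against \emph{every} $h$-fibration: by definition the $h$-fibrations are the maps with the right lifting property against the class $\{Z \to Z \otimes I\}_{Z \in \cC}$, and $i_0 \co A \to A \otimes I$ is itself a member of this generating class. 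Combined with the first observation, this exhibits $i_0$ as a trivial cofibration.

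For (ii), the plan is to reduce both closure claims to the general categorical principle that any class of arrows defined by a left lifting property against some fixed class of maps is automatically closed under retracts, pushouts, and transfinite (in particular sequential) colimits: a lifting problem against a map in the right class can be transported across the retract, pushout square, or colimit cocone to a lifting problem for an element of the original class, which is solved by hypothesis, and then pushed forward again via the universal property. This handles the cofibration case directly, since strong cofibrations are by definition $\{h\text{-fibrations that are homotopy equivalences}\}^{\boxslash}$.

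The main obstacle is the trivial-cofibration half of (ii): the model-theoretic definition ``strong cofibration plus homotopy equivalence'' is not overtly a left lifting class. I expect the cleanest route is first to identify trivial cofibrations with $\{h\text{-fibrations}\}^{\boxslash}$, whereupon the closure properties reduce again to the generic LLP argument above. One inclusion follows from the fact that $p_0 \co Y^I \to Y$ is always an $h$-fibration, so the LLP against all $h$-fibrations immediately extracts the homotopy data required to build a homotopy equivalence; the reverse inclusion is where genuinely topological input enters, analogous in spirit to Proposition~\ref{stromchar} in the point-set case. Alternatively, one can preserve the model-theoretic definition and invoke an enriched gluing lemma --- LLP closure of the strong-cofibration component combined with the preservation of homotopy equivalences by pushouts and sequential colimits along strong cofibrations --- but the LLP reformulation seems strictly more convenient for the later sections.
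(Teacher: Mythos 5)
Your proposal is substantively correct and tracks the paper's intent, though the paper itself is terse: for part (i) it simply cites Schw\"{a}nzl--Vogt and May--Sigurdsson, and for part (ii) it notes only that the closure properties are immediate for any class of arrows defined by a left lifting property. Your explicit verification of (i) -- the section $r$, the homotopy via $A\otimes\mu$, and the observation that $i_0\co A\to A\otimes I$ lies in the very class against which $h$-fibrations are required to lift -- is a clean replacement for the literature citation and is entirely correct.

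For part (ii), your reduction to the general LLP-closure principle is exactly what the paper has in mind. The one place where your proposal overreaches is the discussion of identifying $\{h\text{-fibrations}\}^{\boxslash}$ with ``strong cofibration plus homotopy equivalence.'' That equivalence genuinely requires Schw\"{a}nzl--Vogt-style input (and is part of what eventually makes Corollary \ref{maincor} work), but it is not needed to prove Lemma \ref{strongcof}: for the closure statement one only uses that ``trivial cofibration'' can be read as the LLP class $\{h\text{-fibrations}\}^{\boxslash}$, which is how it is used downstream in \ref{colesconstruction} (to see that $Lf$, a pushout of a cylinder inclusion, is a trivial cofibration). So your first sentence of the last paragraph -- pass to the LLP description and invoke generic closure -- is all that is required; the analysis of the two inclusions, while not wrong, is a detour that the lemma does not depend on.
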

\begin{proof}
The proofs can be found in \cite{schwaenzlvogt} and \cite{maysig}. Part (ii) is immediate from the closure properties of any collection of arrows defined by a lifting property.
\end{proof}

\begin{Ccon}\label{colesconstruction}
Cole's construction attempts to factor an arbitrary map $f\co X \to Y$ in $\cC$ into a trivial cofibration followed by a fibration. 
To this end, start by forming the mapping path object $Nf$ of $f$, in precise analogy with Definition \ref{defn:mappingspace}. A new object $Ef$ is constructed by pushing out one of the projections from the pullback $\phi_f \co Nf \to X$ along  the natural map $i_0\co Nf \to Nf \otimes I$. Using the morphisms $f$ and $\widehat{\chi_f}$, the adjoint to the other projection $\chi_f$, we obtain an induced map $Rf\co Ef \to Y$ as shown in the following diagram.
\begin{equation}\label{eq:keydiagram}
\xymatrix{ Y^I \ar[r]^{p_0}            & Y  \\
Nf \ar[r]^-{\phi_f} \ar[d]_{i_0}   \ar[u]^{\chi_f}\pbalt                      & X \ar[u]^f \ar[d]_{Lf} \ar[rdd]^{f}\\
Nf \otimes I \ar[r]^-{\psi_f} \ar[rrd]_{\widehat{\chi_f}}  & Ef \ar[rd]|{Rf} \po \\
                                                                              & & Y
}
\end{equation}

In this way, we have factored $f$ as $Rf \circ Lf$ and furthermore, by Lemma \ref{strongcof}, the map $Lf\co X \to Ef$ is a trivial cofibration. If the map $Rf$ were a fibration, we would be done. However, this fails in general, so Cole proposes to iterate this construction, replacing $f$ by $Rf$, and applying the functorial factorization $(L,R)$ to the right factor. The eventual right factor of $f$ is defined by passing to the colimit $R^\omega f = \mathrm{colim}( Rf \to R^2f \to R^3f \to \cdots)$. The left factor of $f$ is then the composite $X \xrightarrow{Lf} Ef \xrightarrow{LRf} ERf \xrightarrow{LR^2f} ER^2f \to \cdots \to ER^\omega f$.

Because each map in the image of $L$ is a trivial cofibration, the left factor is a trivial cofibration. It
 remains to show that $R^{\omega}f$ is a fibration, which by \cite[5.2]{colemany} is equivalent to finding a lift in
\[
\xymatrix{N{R^{\omega}f} \ar[r]^{\phi_{R^{\omega}f}} \ar[d]_{i_0} & E{R^{\omega}f} \ar[d]^{R^{\omega}f} \\
N{R^{\omega}f} \otimes I \ar[r]_-{\widehat{\chi_f}} \ar@{-->}[ru] & Y
}
\]
To this end, \cite{colemany} asserts that the required lift is given by $\psi_{R^{\omega}f}$; however, the maps $\psi_{R^{n}f}$ do \emph{not} glue to induce a map $N{R^{\omega}f} \otimes I \to E{R^{\omega}f}$, cf. Section \ref{app1}. 
\end{Ccon}

We will see that there is a natural modification of the iterative part of Cole's construction that produces an algebraic weak factorization system with the appropriate homotopical properties.

\subsection{Algebraic characterization of fibrations}\label{sec:algchar}

The first key observation is that, even though the right factor $Rf$ fails to be an $h$-fibration, algebras for the pointed endofunctor $R$ are precisely $h$-fibrations.
The proof follows easily once we understand the universal property of the mapping space $Nf$. 

Fix a morphism $f\co X \to Y$ and let $\sq_f \co \cC^{\mathrm{op}} \to \Set$ be the functor that maps an object $A$ to the set of commutative squares of the form
\[
\xymatrix{A \ar[r] \ar[d]_{i_0} & X \ar[d]^f \\
A \otimes I \ar[r] & Y
}
\]
These squares correspond to lifting problems that test whether $f$ is an $h$-fibration.

\begin{lemma}\label{pathchar}
The functor $\sq_f$ is represented by the mapping path object $Nf$.
\end{lemma}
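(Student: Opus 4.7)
The plan is to unpack both sides of the claimed representability and identify them via the universal property of the cotensor together with that of the pullback defining $Nf$.

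First, I would describe an element of $\sq_f(A)$ as a pair $(a, h)$ consisting of morphisms $a\co A \to X$ and $h \co A \otimes I \to Y$ subject to the single compatibility condition $f \circ a = h \circ i_0$. Next, since $\cC$ is cotensored over $\Top$, the tensor--cotensor adjunction $(- \otimes I) \dashv (-)^I$ gives a natural bijection between such $h$ and morphisms $\widehat{h} \co A \to Y^I$; moreover, under this adjunction, the map $i_0\co A \to A\otimes I$ is transpose to $p_0 \co Y^I \to Y$, so the compatibility condition $f\circ a = h\circ i_0$ corresponds precisely to the equation $f \circ a = p_0 \circ \widehat{h}$. Thus $\sq_f(A)$ is in natural bijection with the set of pairs $(a, \widehat{h})$ making
\[
\xymatrix{A \ar[r]^-{\widehat h} \ar[d]_{a} & Y^I \ar[d]^{p_0} \\ X \ar[r]_-f & Y}
\]
commute.

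By the universal property of the pullback defining $Nf$, such pairs are in natural bijection with morphisms $A \to Nf$; explicitly, $a = \phi_f \circ k$ and $\widehat{h} = \chi_f \circ k$ for a unique $k \co A \to Nf$. Composing this with the previous step yields the desired natural isomorphism $\sq_f(A) \cong \cC(A, Nf)$, and naturality in $A$ is automatic since both the adjunction isomorphism and the pullback universal property are natural.

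No real obstacle is anticipated here: the lemma is essentially a definition chase, and the only non-trivial ingredient is the standard fact that under the tensor--cotensor adjunction the endpoint inclusion $i_0$ transposes to the evaluation $p_0$. I would simply remark on this and let the diagrams speak for themselves.
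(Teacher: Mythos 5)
Your proof is correct and is essentially the same as the paper's: both transpose the homotopy $h\colon A\otimes I\to Y$ across the $(-\otimes I)\dashv(-)^I$ adjunction to a map $\widehat h\colon A\to Y^I$ and then invoke the universal property of the pullback defining $Nf$, exactly as in the paper's diagram (\ref{eq:representation}). The one point you flag as needing care — that $i_0$ transposes to $p_0$ — is the same observation the paper relies on implicitly, so there is no gap.
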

\begin{proof}
By the defining universal property of $Nf$, a map $\alpha: A \to Nf$ classifies a commutative square 
\begin{equation}\label{eq:representation}
\vcenter{\xymatrix{ A \ar[d]_{i_0} \ar[r]^u & X \ar[d]^f \\ A \otimes I \ar[r]_-v & Y}} \qquad \leftrightsquigarrow \qquad \vcenter{\xymatrix{ A \ar[dr]^{\alpha} \ar@/^1.5ex/[drr]^{\widehat{v}} \ar@/_1.5ex/[ddr]_u \\ & Nf \ar[r]^-{\chi_f} \ar[d]_(.4){\phi_f} \pb & Y^I \ar[d]^{p_0} \\ & X \ar[r]_f & Y}}
\end{equation}
\end{proof}

In particular, the identity map at $Nf$ classifies the right hand square in 
\begin{equation}\label{eq:liftingfactorization}
\xymatrix{ A \ar@/^2ex/[rr]^u \ar[d]_{i_0} \ar[r]_\alpha &Nf \ar[d]_{i_0} \ar[r]_-{\phi_f} & X \ar[d]^f \\ A \otimes I \ar[r]^-{\alpha \otimes I} \ar@/_2ex/[rr]_v & Nf \otimes I \ar[r]^-{\widehat{\chi_f}} & Y}
\end{equation}
which features prominently in the construction of the factorization (\ref{eq:keydiagram}). By the Yoneda lemma, or alternatively by adjointness, a square (\ref{eq:representation}) factors uniquely as the above diagram (\ref{eq:liftingfactorization}), where $\alpha \co A \to Nf$ is the classifying map. 

It is now easy to prove that the $h$-fibrations are precisely those objects in the image of the forgetful functor $\mathbf{Alg}_R \to \cC^{\mathbf{2}}$; in fact, 
any $h$-fibration lifts \emph{naturally} against the maps $i_0$:

\begin{prop}\label{charfib}
The category $\mathbf{Alg}_R$ is isomorphic to the category $\cI^\boxslash$ over $\cC^{\mathbf{2}}$, where $\cI$ is the 
 category whose objects are the maps $\{ i_0 \co A \to A \otimes I \mid A \in \cC \}$ and whose morphisms correspond to maps $A' \to A$ in $\cC$. 
\end{prop}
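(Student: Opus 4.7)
The strategy is to unravel both sides of the claimed isomorphism using the universal property of $Nf$ as a representing object (Lemma \ref{pathchar}) and of $Ef$ as the pushout in (\ref{eq:keydiagram}). On the one hand, by Lemma \ref{lem:Ralgebradefinition} an $R$-algebra structure on $f \co X \to Y$ is precisely a map $t \co Ef \to X$ with $t \cdot Lf = 1_X$ and $f \cdot t = Rf$. Since $Ef = X \sqcup_{Nf} (Nf \otimes I)$ by (\ref{eq:keydiagram}), such a $t$ corresponds via the pushout universal property to a map $h \co Nf \otimes I \to X$ satisfying $h \cdot i_0 = \phi_f$ and $f \cdot h = \widehat{\chi_f}$; equivalently, to a diagonal filler in the universal square
\[
\xymatrix{Nf \ar[r]^{\phi_f} \ar[d]_{i_0} & X \ar[d]^f \\ Nf \otimes I \ar[r]_-{\widehat{\chi_f}} \ar@{-->}[ur]^h & Y.}
\]

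On the other hand, an object of $\cI^\boxslash$ over $f$ is a lifting function $\Phi_f$ that solves every square $(u,v) \co i_0 \to f$ for $i_0 \co A \to A \otimes I$ in $\cI$, naturally in the morphisms $A' \to A$ of $\cI$. By Lemma \ref{pathchar} such squares are classified by maps $\alpha \co A \to Nf$, and the Yoneda lemma combined with the $\otimes \dashv (-)^I$ adjunction identifies a natural choice of diagonal fillers with the single map $h \co Nf \otimes I \to X$ obtained by evaluating $\Phi_f$ at the universal square classified by $1_{Nf}$. The lifting condition on this value of $\Phi_f$ becomes exactly $h \cdot i_0 = \phi_f$ and $f \cdot h = \widehat{\chi_f}$, and every other lift is forced to be the composite $h \cdot (\alpha \otimes I)$ appearing in (\ref{eq:liftingfactorization}). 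This gives a natural bijection between objects of $\mathbf{Alg}_R$ and of $\cI^\boxslash$ sitting over $f$.

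For morphisms, a commutative square $(u,v) \co f \to f'$ induces a map $N(u,v) \co Nf \to Nf'$ by functoriality of $N$, and the pushout description of $E$ shows that $E(u,v) \co Ef \to Ef'$ is the universal map assembled from $u \co X \to X'$ and $N(u,v) \otimes I \co Nf \otimes I \to Nf' \otimes I$. Under the bijection above, the condition $u \cdot t = t' \cdot E(u,v)$ of Definition \ref{defn:mapRalg} reduces to the single equality $u \cdot h = h' \cdot (N(u,v) \otimes I)$, which, via Lemma \ref{pathchar} applied to arbitrary squares classified by $\alpha$, is equivalent to the $\cI^\boxslash$ compatibility condition of (\ref{eq:boxslashmap}). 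Both forgetful functors to $\cC^{\mathbf{2}}$ merely discard this extra data, so the isomorphism is over $\cC^{\mathbf{2}}$ as required.

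I do not expect any serious obstacle: the argument is a sequence of Yoneda and adjunction translations, and the only real content is the identification of $R$-algebra structures with diagonal fillers in the universal lifting square, which falls out immediately from the pushout definition of $Ef$. The only point requiring care is the bookkeeping needed to ensure the coherent interaction of the two naturalities (in $A \in \cC$ and in $f \in \cC^{\mathbf{2}}$), and this is forced by the universal properties of $Nf$ and $Ef$.
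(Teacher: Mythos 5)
Your proof is correct and follows essentially the same route as the paper: identify an $R$-algebra structure with a diagonal filler for the generic lifting problem via the pushout universal property of $Ef$, and use Lemma~\ref{pathchar} (together with Yoneda and the $\otimes \dashv (-)^I$ adjunction) to identify a natural lifting function $\Phi_f$ with that same filler. The only stylistic difference is that you pass through the ``generic filler'' as a common middle term, whereas the paper constructs the two directions of the bijection separately and appeals to pushout uniqueness for bijectivity; the content is identical.
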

\begin{proof}
Suppose $(f,s) \in \mathbf{Alg}_{R}$. To solve a lifting problem \[\xymatrix{ A \ar[d]_{i_0} \ar[r]^u & X \ar[d]^f \\ A \otimes I \ar[r]_-v & Y}\] we first factor it as displayed in (\ref{eq:liftingfactorization}) and then factor the right hand square in (\ref{eq:liftingfactorization}) through the pushout of (\ref{eq:keydiagram}). This yields:
\[
\xymatrix{ A \ar[r]\ar[d]_{i_0} \ar@/^3ex/[rrr]^u & Nf \ar[d]_{i_0}  \ar[r]_-{\phi_f} & X \ar[d]^{Lf} \ar@{=}[r] & X \ar[d]^f \\ A \otimes I \ar[r] \ar@/_5ex/[rrr]_v & Nf \otimes I \ar[r]^-{\psi_f} \ar@/_2ex/[rr]_(.35){\widehat{\chi_f}}  & Ef \po \ar[r]^{Rf} \ar@{-->}[ur]^(.6)s & Y}\] The map $s$ defines an evident solution to the original lifting problem. 
Furthermore, the naturality of the isomorphism of Lemma \ref{pathchar} implies that the lifting functions defined in this manner are natural with respect to the morphisms in the category $\cI$.

Conversely, if $(f, \Phi_f) \in \cI^\boxslash$, then the solution, specified by $\Phi_f$, to the canonical lifting problem against 
the map $i_0 \colon Nf \to Nf \otimes I$ displayed in the left two squares above defines the map $s$ by the universal property of the pushout. The pair $(f,s)$ is then an $\R$-algebra. Uniqueness of the universal property of the pushout implies that these procedures define a bijection between the objects of the categories $\mathbf{Alg}_R$ and $\cI^{\boxslash}$.

The fact that maps in $\mathbf{Alg}_R$ agree with maps in $\cI^{\boxslash}$ follows easily from comparing the definitions \eqref{eq:mapRalg} and  \eqref{eq:boxslashmap} with the diagram above.
\end{proof}

\begin{rmk}\label{redundantrmk}
This argument shows that $f$ is an $h$-fibration if and only if there is a lift \[ \xymatrix{ Nf \ar[r]^{\phi_f} \ar[d]_{i_0} & X \ar[d]^f \\ Nf \otimes I  \ar@{-->}[ur] \ar[r]_-{\widehat{\chi_f}} & Y}\] 
as observed in \cite[5.2]{colemany}. We think of this square as presenting a ``generic lifting problem'' which detects fibrations. This is analogous to the lifting problem given by \eqref{genlifting} in the cofibrantly generated case.
\end{rmk}

At this point we are confronted with a problem: the algebras for the functor $R$ are precisely the fibrations, but because $R$ is not a monad, the maps $Rf$ are not themselves $R$-algebras. One idea is to try and replace the functor $R$ by its ``free monad'' $\F$, which is characterized by the property that the category of $\F$-algebras is isomorphic over $\cC^{\mathbf{2}}$ to the category of $R$-algebras (so in particular $\F$-algebras are precisely fibrations). There are two obstacles to implementing this idea. The first is set-theoretical. By an easy application of the monadicity theorem, the free monad $\F$ is equivalently specified by a left adjoint to the forgetful functor $\mathbf{Alg}_R \to \cC^{\mathbf{2}}$. However, it is not quite enough to simply know that an adjoint exists: the resulting monad on $\cC^{\mathbf{2}}$ might not be a monad over $\cod$ and thus not define the right factor in a functorial factorization. A theorem of Kelly, described in the next section, exhibits a certain smallness condition on $R$ under which the free monad  ``exists constructively''; in this case, a functorial factorization is produced. 

A second obstacle remains. Supposing that the free monad $\F$ exists constructively, it is not clear a priori that the left factor will still be a trivial cofibration because this construction involves quotienting. However, we can show that the factorization produced by this procedure has the structure of an algebraic weak factorization system; in particular, the left factor is a free $\C$-coalgebra, therefore lifts against the $\F$-algebras and is hence a trivial cofibration. 

\subsection{The free monad on a pointed endofunctor}\label{freemonad}

We now explain what precisely we mean by ``free monad'' and state Kelly's abstract existence result. In the next section, we then verify that the functor $R$ satisfies his conditions under certain set-theoretical assumptions on the underlying category $\cC$. 

Let $R$ be a pointed endofunctor on a category $\cC$. The \emph{algebraically free} monad on $R$ is a monad $\F$ together with an isomorphism $\mathbf{Alg}_\F \cong \mathbf{Alg}_R$ over $\cC$. When $\cC$ is locally small and complete, algebraically free monads coincide with so-called \emph{free monads}, which are defined in \cite[22.2-4]{kellyunified}. We use the terminology ``free monad'' because it is shorter.

Furthermore,  under good conditions, there is a canonical construction that produces the free monad on $R$, in which case we say the free monad \emph{exists constructively}. The construction is via a colimit defined using transfinite induction; the ``good conditions'' guarantee that this construction \emph{converges}.\footnote{Compare with Quillen's small object argument, which never converges, but must be terminated artificially.} 

\begin{rmk}\label{rmk:wellpointed}
A na\"{i}ve approach might be to try and define $F$ to be the colimit of \[ \id \to R \to R^2 \to R^3 \to \cdots \] This works in the case where $R$ is \emph{well-pointed}, meaning $\eta R = R \eta \co R \to R^2$, but not otherwise. Interestingly, the failure of Cole's functor $R$ to be well-pointed precisely highlights the subtle point at which his argument breaks down. We'll say more about this in the appendix, cf.~Section \ref{app2}. 
\end{rmk}

The correct construction is due to Kelly, the first few stages of which we will describe explicitly in the appendix. We make use of only a special case of his theorem  \cite[22.3]{kellyunified}.

In order to state it, we need to introduce a little bit of terminology. An \emph{orthogonal factorization system} $(\cE,\cM)$ on a category $\cC$ is a weak factorization system for which both the factorizations and the liftings are unique. It is called \emph{well-copowered} if every object in $\cC$ has a mere set of $\cE$-quotients, up to isomorphism. When $\cC$ is cocomplete it follows that the maps in $\cE$ are epimorphisms \cite[1.3]{kellyunified}. 
\begin{rmk}
Note that any category that is cocomplete and so that each object has only a sets worth of epimorphism-quotients --- 
a condition satisfied by all categories one meets in practice --- has a functorial factorization where the left factor is an epimorphism and the right factor is a strong monomorphism, see \cite[4.4.3]{borceuxhandbookI}. The dual hypotheses are equally common in our setting. In practice this means that there are always at least two choices for $(\cE,\cM)$: (epimorphisms, strong monomorphisms) and (strong epimorphisms, monomorphisms).
\end{rmk}

A cocone in $\cC$ all of whose legs are elements of $\cM$ is called an $\cM$-\emph{cocone} or an $\cM$-\emph{colimit} in the case it is a colimit cocone. It follows from the right cancelation property of $\cM$ that the morphisms in the diagram also lie in $\cM$, but our condition is stronger. In what follows, we will implicitly identify a regular cardinal $\alpha$ with its initial ordinal, such that $\alpha$ indexes a (transfinite) sequence whose objects are $\beta < \alpha$.

We are now ready to state Kelly's theorem.

\begin{thm}[Kelly]\label{kellythm} Suppose $\cC$ is complete, cocomplete, and locally small. If a pointed endofunctor $R$ on $\cC$ satisfies the following ``smallness'' condition:
\begin{itemize}
\item[($\dagger$)] there is a well-copowered orthogonal factorization system $(\cE,\cM)$ on $\cC$ and a regular cardinal $\alpha$ so that $R$ sends $\alpha$-indexed $\cM$-colimits  to colimits,
\end{itemize}
then the free monad $\F$ on $R$ exists constructively. 
\end{thm}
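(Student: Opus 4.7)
The plan is to follow Kelly's transfinite construction, realising the underlying functor of the free monad $\F$ as the colimit $F_\alpha = \mathrm{colim}_{\beta < \alpha} F_\beta$ of a transfinite sequence of pointed endofunctors $(F_\beta, \sigma_\beta \co \id \to F_\beta)$ on $\cC$, in which each connecting map $F_\beta \to F_{\beta+1}$ is a componentwise $\cM$-map. The construction must be engineered so that $F_\alpha$ underlies a canonical monad $\F$ and so that the resulting map of pointed endofunctors $R \to \F$ is universal in the sense that $\mathbf{Alg}_\F \cong \mathbf{Alg}_R$ over $\cC$.

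First I would set $F_0 = \id$ and $F_1 = R$, with $\sigma_0 = \id$ and $\sigma_1 = \eta$. At a successor stage $\beta+1$, the absence of a monad structure on $F_\beta$ manifests as a pair of distinct candidate natural transformations between functors built out of $F_\beta$ and $R$ (the monad axioms one is trying to enforce). One forms $F_{\beta+1}$ as the pushout coequalising them, and then uses the orthogonal factorization system $(\cE, \cM)$ to factor the induced comparison so that the connecting map $F_\beta \to F_{\beta+1}$ is an $\cM$-map while the quotient belongs to $\cE$. Well-copoweredness of $\cE$ ensures that, pointwise on each object $A \in \cC$, the chain of $\cE$-quotients $F_\beta A$ can grow only set-many times up to isomorphism.

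At limit stages $\lambda \le \alpha$, define $F_\lambda = \mathrm{colim}_{\beta < \lambda} F_\beta$; by construction this is an $\cM$-colimit. At the terminal stage $\alpha$, the smallness hypothesis is brought to bear: because $R$ preserves $\alpha$-indexed $\cM$-colimits, one has $R F_\alpha \cong \mathrm{colim}_{\beta < \alpha} R F_\beta$, and combined with the iterated pushouts this forces every prior obstruction natural transformation to become an identity on $F_\alpha$. The upshot is that $F_\alpha$ inherits a canonical monad structure $\F$ with unit $\sigma_\alpha$ and multiplication assembled from the now-coherent structure maps; the pointedness of $\F$ over $\cod$ (in the application of interest) is inherited from $R$ since all pushouts and colimits are formed in $\cC^{\mathbf{2}}$ over $\cC$.

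Finally I would verify algebraic freeness $\mathbf{Alg}_\F \cong \mathbf{Alg}_R$ by transfinite induction: an $R$-algebra $a \co RA \to A$ extends uniquely to compatible maps $F_\beta A \to A$ (using $a$ and the pushout universal property at successor stages, and the colimit universal property at limits), giving an $\F$-algebra; conversely every $\F$-algebra restricts to an $R$-algebra along $R \to \F$. The main obstacle, and the technical core of Kelly's argument, is the verification of convergence at stage $\alpha$: making the successor-stage pushouts interact correctly with the $\cM$-colimit preservation of $R$ so that all monadicity obstructions collapse simultaneously at the colimit. This is precisely what the conjunction of well-copoweredness of $\cE$ and the preservation hypothesis on $R$ is designed to secure.
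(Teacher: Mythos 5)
The paper offers no proof of this statement: it is attributed to Kelly and cited as \cite[22.3]{kellyunified}, with only the first two stages of the transfinite construction sketched in the appendix, \S\ref{app2}. So there is no internal argument against which to compare your attempt. At the level of architecture your sketch is sound: Kelly does build a transfinite sequence of pointed endofunctors starting from $\id$ and $R$, with coequalizers at successor ordinals, colimits at limit ordinals, a stabilization argument using well-copoweredness together with the $\cM$-colimit preservation hypothesis, and algebraic freeness verified by a transfinite induction on algebras.

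Two steps as written are imprecise enough to flag. First, the successor step is a coequalizer, not a pushout: as \S\ref{app2} records, $E^2f$ is the coequalizer of the pair $\eta_R, R\eta$ (in the paper's notation $LRf$ and $E(Lf,\id) \co Ef \rightrightarrows ERf$), so ``the pushout coequalising them'' and ``the pushout universal property at successor stages'' should read ``coequalizer'' throughout; a pushout is a colimit of a cospan, not of a parallel pair. Second, and more substantively, the convergence argument is compressed past the point of correctness. The $\cM$-colimit preservation hypothesis does not by itself force stabilization at stage $\alpha$, nor does it ``force every prior obstruction natural transformation to become an identity on $F_\alpha$.'' Rather, well-copoweredness bounds the length of the chain of $\cE$-quotients produced at successor stages, and this is what forces the chain of connecting maps eventually to become isomorphisms; the $\cM$-colimit preservation hypothesis is what keeps the construction well-behaved through limit ordinals and what ultimately makes the stabilized endofunctor carry a monad structure whose algebras coincide with those of $R$. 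Your description also leaves unclear exactly where the $(\cE,\cM)$-factorizations are applied and why the resulting connecting maps should lie in $\cM$, when the coequalizer projections will typically land in $\cE$. Separating these two roles and making the bookkeeping precise is the technical content of Kelly's theorem, and the proposal as it stands is a plausible overview of the strategy rather than a proof.
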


If $R$ is a pointed endofunctor on $\cC^{\bf 2}$ over $\cod$, then each functor and natural transformation in the free monad construction is constant on its codomain component. It follows that $\F$ is a monad over cod and hence gives rise to a functorial factorization. Furthermore, this observation allows us to weaken the smallness condition for such $R$: It suffices to show that $R$ preserves $\cM$-colimits of the form 
\begin{equation}\label{eq:Mcolimit} \xymatrix{ X_0 \ar[drr]_{f_0} \ar[r]^{m_0} & X_1 \ar[dr]^{f_1} \ar[r]^{m_1} & \cdots \ar[r]^{m_{\beta-1}} & X_\beta \ar[r]^{m_\beta} \ar[dl]_{f_\beta} & \cdots \ar[r] & X_\alpha \ar[dlll]^*+{\labelstyle\mathrm{colim}_{\beta < \alpha}\, f_\beta = f_\alpha} \\ & & Y}\end{equation} 
See \cite[p.~31]{garnercofibrantly}.

\subsection{Smallness}\label{smallness}

The functor $R$ of \eqref{eq:keydiagram} is constructed by means of various topologically enriched limits and colimits in $\cC$. In this section, we will show that if $\cC$ satisfies a set-theoretical condition, then Cole's functor $R$ satisfies the necessary smallness condition to guarantee convergence of the free monad sequence. This condition is very similar to Cole's ``cofibration hypothesis'' \cite[4.1]{colemany}. Indeed, as explained there, work of Lewis \cite{lewisthesis} shows that many topologically bicomplete categories of interest satisfy our condition.

\begin{defn}\label{defn:monomorphism} Suppose $(\cE,\cM)$ is a well-copowered orthogonal factorization system on a topologically bicomplete category $\cC$. We say $\cC$ satisfies the \emph{monomorphism hypothesis} if there is some regular cardinal $\alpha$ so that the mapping path space functor $N \co \cC^{\bf 2} \to \cC$ preserves $\cM$-colimits of diagrams of the form \eqref{eq:Mcolimit}, in the sense that the natural map \[ \mathrm{colim}_{\beta < \alpha}\, Nf_\beta \to N(\mathrm{colim}_{\beta <\alpha}\, f_\beta) = Nf_\alpha\] is an isomorphism.
\end{defn}

\begin{lemma}\label{smallnesslemma}
If $\cC$ is a topologically bicomplete category satisfying the monomorphism hypothesis, then the functor $R$ constructed in \ref{colesconstruction} satisfies condition $(\dagger)$ of Theorem \ref{kellythm}. Therefore, the free monad $\F$ on $R$ exists constructively. 
\end{lemma}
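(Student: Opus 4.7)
The plan is to reduce condition $(\dagger)$ to the monomorphism hypothesis via a colimit interchange argument. Following the remark after Theorem \ref{kellythm}, since $R$ is defined over $\cod$, it suffices to check that $R$ preserves $\cM$-colimits of diagrams of the form \eqref{eq:Mcolimit}, i.e., sequences of arrows $f_\beta \co X_\beta \to Y$ sharing a common codomain $Y$ with $f_\alpha = \mathrm{colim}_{\beta < \alpha}\, f_\beta$. Equivalently, writing $E$ for the common functor $\cod L = \dom R$, I would show that the natural comparison map $\mathrm{colim}_{\beta < \alpha}\, Ef_\beta \to Ef_\alpha$ is an isomorphism. The well-copowered factorization system $(\cE, \cM)$ is the one supplied by the monomorphism hypothesis, so the only substantive content is this preservation statement.

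First I would unpack the defining pushout of $Ef$ from diagram \eqref{eq:keydiagram}: $Ef$ is the pushout of
\[
X \xleftarrow{\phi_f} Nf \xrightarrow{i_0} Nf \otimes I.
\]
For a diagram of the form \eqref{eq:Mcolimit} I want to identify each piece of this pushout diagram at stage $\alpha$ with the sequential colimit of the corresponding pieces at stages $\beta < \alpha$. Three preservation facts are needed: (i) the domain functor $\dom \co \cC^{\mathbf{2}} \to \cC$ preserves all colimits, giving $X_\alpha = \mathrm{colim}_{\beta < \alpha}\, X_\beta$; (ii) the mapping path space functor $N$ preserves $\cM$-colimits of shape \eqref{eq:Mcolimit} by the monomorphism hypothesis, giving $Nf_\alpha = \mathrm{colim}_{\beta < \alpha}\, Nf_\beta$; and (iii) the tensor functor $(-) \otimes I \co \cC \to \cC$ preserves all colimits, being a left adjoint to the cotensor $(-)^I$ coming from the topological enrichment, so that $Nf_\alpha \otimes I = \mathrm{colim}_{\beta < \alpha}\, (Nf_\beta \otimes I)$. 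Naturality of $\phi$ and of $i_0$ packages these three colimits into a sequential colimit of pushout diagrams whose colimit in the category of spans is the pushout diagram defining $Ef_\alpha$.

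Finally I would invoke the general fact that colimits commute with colimits: the pushout (a colimit over the span shape) of the termwise colimit of a sequence of spans is the sequential colimit of the termwise pushouts. Applying this to the spans $X_\beta \xleftarrow{\phi_{f_\beta}} Nf_\beta \xrightarrow{i_0} Nf_\beta \otimes I$ yields $\mathrm{colim}_{\beta < \alpha}\, Ef_\beta \cong Ef_\alpha$, as required. Hence $R$ preserves $\cM$-colimits of the relevant shape, so condition $(\dagger)$ is satisfied and Theorem \ref{kellythm} supplies the constructive existence of the free monad $\F$ on $R$. The main obstacle is really not in the argument itself---once the monomorphism hypothesis is in force, this is a routine colimit interchange---but rather in verifying that the categories of interest (spaces, $G$-spaces, diagram spectra, etc.) genuinely satisfy the monomorphism hypothesis; this verification, following \cite{lewisthesis, colemany}, is tacitly understood here and not part of the present lemma.
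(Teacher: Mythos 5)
Your proposal is correct and follows essentially the same approach as the paper: reduce to showing $\dom \circ R \colon f \mapsto Ef$ preserves $\cM$-colimits of shape \eqref{eq:Mcolimit}, identify the three corners of the defining pushout of $Ef_\alpha$ as sequential colimits using the monomorphism hypothesis for $N$ and the left-adjointness of $- \otimes I$, and conclude by commuting colimits with colimits. The paper carries out exactly this argument, merely substituting the diagrammatic cube for your prose description of the span of spans.
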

\begin{proof}
By the remarks made at the end of \S\ref{freemonad}, we need to check that $R$ sends an $\cM$-colimit diagram of the form \eqref{eq:Mcolimit} to a colimit cone in $\cC^{\mathbf{2}}$. Clearly, it is enough to verify this for $\dom \circ R: f \mapsto Ef$.  By the monomorphism hypothesis and the fact that $-\otimes I$ is a left adjoint, the colimits of the top and left corners of the following diagram 
\[\xymatrix@!0{ & &   & &  & \\ & Nf_1  \ar[rrr] \ar'[d][dd] \ar@{..>}[ur]  & & & X_1 \ar[dd] \ar@{..>}[ur]  \\ Nf_0 \ar[dd] \ar[rrr] \ar[ur] & & &  X_0 \ar[ur]\ar[dd] & &\\ & Nf_1 \otimes I \ar[rrr] \ar@{..>}[ur] & & &  Ef_1 \po \ar@{..>}[ur] \\ Nf_0 \otimes I \ar[ur]  \ar[rrr] & & & Ef_0 \po \ar[ur] }\] 
are $Nf_\alpha$, $X_\alpha$, and $Nf_\alpha \otimes I$. The pushout of these objects is, by definition, $Ef_\alpha$. Because colimits commute with colimits, the canonical map 
\[ \mathrm{colim}_{\beta < \alpha}\, Ef_\beta \to E(\mathrm{colim}_{\beta <\alpha}\, f_\beta) = Ef_\alpha\] 
is thus an isomorphism, and we conclude that $R$ preserves $\cM$-colimits, as desired.
\end{proof}

\begin{example} The category $\Top$ satisfies the monomorphism hypothesis for the orthogonal factorization system in which $\cE$ is the surjections and $\cM$ is the subspace inclusions. This is a consequence of an observation made by Lewis \cite{lewisthesis}, summarized in \cite[\S 4]{colemany}, about pullbacks of countable sequential $\cM$-colimits. The orthogonal factorization system $(\cE,\cM)$ lifts to $\Top_*$. Because pullbacks, sequential colimits, and mapping path objects coincide with those of spaces, this category also satisfies the monomorphism hypothesis. Similarly, $(\cE,\cM)$ lifts to $G$-spaces, where $G$ is a topologically group, or indeed to any space-valued diagram category. Limits and colimits in such categories are computed pointwise, so again the monomorphism hypothesis is satisfied. 
\end{example}

\begin{example} 
Other interesting examples are given by various categories of topological spectra. To illustrate how the condition of Definition \ref{defn:monomorphism} might be checked, we include a sketch in the case of diagram spectra \cite{MMSS}. 

To this end, let $\cD$ be a small based topological category. Let $R$ be a monoid in the closed symmetric monoidal category of continuous functors from $\cD$ to $\Top_*$.  Following \cite[1.10]{MMSS}, the category of $\cD$-spectra over $R$ is isomorphic to the category of $R$-modules. The orthogonal factorization system $(\cE,\cM)$ on $\Top_*$ defines an orthogonal factorization system on the functor category $\Top_*^{\cD}$ with the classes and factorizations defined pointwise. In any category with an orthogonal factorization system $(\cE,\cM)$ and monad $T$, if $T$ preserves the class $\cE$ then the orthogonal factorization system lifts to the category of $T$-algebras \cite[2.3.7]{cockett}. 

In our situation, the monad $R \wedge -$ is defined via a colimit, which is ultimately computed pointwise in $\Top_*$.   Because smash products preserve surjections in $\Top_*$ and the left class of any orthogonal factorization system is stable under colimits in the arrow category,  this orthogonal factorization system lifts to the category of $R$-modules. The forgetful functor from $R$-modules to 
$\Top_*^{\cD}$ preserves both limits and colimits; hence this category satisfies the monomorphism hypothesis. 
\end{example}

\begin{rmk}
We should remark that any locally presentable topologically bicomplete category $\cC$ also satisfies our hypothesis. Even though local presentability doesn't seem to be a reasonable assumption in the topological context, it might be in adaptations of our methods to other situations, e.g., categories enriched, tensored, and cotensored in appropriate categories other than $\Top$. Furthermore, this observation highlights the set-theoretical nature of the monomorphism hypothesis.  To prove the claim, note that in a locally $\alpha$-presentable category $\cC$ there exists a set of generators $g$ so that the associated representable functors $\cC(g,-)$ detect isomorphisms and preserve $\alpha$-filtered colimits. This, together with the fact that filtered colimits and  finite limits commute in $\Set$, can be used to prove that any locally $\alpha$-presentable category satisfies the monomorphism hypothesis for the (strong epimorphism, monomorphism) orthogonal factorization system and for the cardinal $\alpha$. We leave the remaining details to the reader.  
\end{rmk}

\subsection{The main theorem}

Our main result is the following theorem, which asserts that for a very general class of topologically bicomplete categories, applying the free monad construction to Cole's step-one right functor $R$ yields an algebraic weak factorization system.

\begin{thm}\label{technicalmainthm} If $\cC$ is topologically bicomplete category satisfying the monomorphism hypothesis, the functor $R$ of (\ref{eq:keydiagram}) satisfies the condition ($\dagger$) and furthermore the functorial factorization $(C,F)$ constructed by the free monad sequence is an algebraic weak factorization system.
\end{thm}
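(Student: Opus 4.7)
The plan is to assemble the theorem from the preceding results with minimal further work. The first assertion, that $R$ satisfies $(\dagger)$, is immediate from Lemma \ref{smallnesslemma}. Consequently Theorem \ref{kellythm} produces the free monad $\F$ on $R$ constructively. Since $R$ is a pointed endofunctor on $\cC^{\mathbf{2}}$ over $\cod$, the remark at the end of Section \ref{freemonad} guarantees that every stage of the free monad sequence, and hence $\F$ itself, is constant on codomain components; thus $\F$ is a monad over $\cod$, and its unit canonically determines a functorial factorization $(C, F)$ in which $F$ is the right factor.

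To upgrade $(C, F)$ to an algebraic weak factorization system, I would invoke Garner's recognition theorem (Theorem \ref{thm:recognition}): it suffices to exhibit a vertical composition law on the category $\mathbf{Alg}_\F$. I would obtain such a law by transporting one along a chain of isomorphisms over $\cC^{\mathbf{2}}$. First, by the defining universal property of the free monad, there is an isomorphism $\mathbf{Alg}_\F \cong \mathbf{Alg}_R$ over $\cC^{\mathbf{2}}$. Second, Proposition \ref{charfib} furnishes an isomorphism $\mathbf{Alg}_R \cong \cI^{\boxslash}$ over $\cC^{\mathbf{2}}$. Third, Proposition \ref{prop:boxslashcomp} equips $\cI^{\boxslash}$ with a canonical vertical composition law. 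Composing, $\mathbf{Alg}_\F$ inherits a vertical composition law, and Theorem \ref{thm:recognition} then produces the desired algebraic weak factorization system, completing the proof.

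The main obstacle I anticipate is bookkeeping rather than conceptual: one must check that the two isomorphisms appearing above are genuinely over $\cC^{\mathbf{2}}$ (not merely over $\cC$ or $\cC^{\mathbf{2}}$ up to coherent natural isomorphism), and that the composition law from Proposition \ref{prop:boxslashcomp} really does satisfy both clauses (i) and (ii) of Definition \ref{def:complaw} after transport — in particular that the composite of two algebras, rather than just the composite of two underlying maps, is again equipped with a canonical algebra structure compatible with composition of algebra maps. Both checks are straightforward diagram chases using that the forgetful functors in each isomorphism commute with the ambient domain/codomain projections, so the composite constructed at the level of $\cI^{\boxslash}$ via the iterated lift displayed in \eqref{eq:vertcomp} pulls back to a well-defined associative composite on $\mathbf{Alg}_\F$. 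With this verification in hand, no further point-set topology is required, and Theorem \ref{thm:recognition} delivers the algebraic weak factorization system $(C, F)$.
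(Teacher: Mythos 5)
Your proposal is correct and follows essentially the same route as the paper: apply Lemma \ref{smallnesslemma} for condition $(\dagger)$, use the chain $\mathbf{Alg}_\F \cong \mathbf{Alg}_R \cong \cI^\boxslash$ from Propositions \ref{charfib} and \ref{prop:boxslashcomp} to obtain a vertical composition law, and conclude by Theorem \ref{thm:recognition}. The paper's proof is a three-sentence version of exactly this argument, so the additional bookkeeping you flag is implicitly absorbed into the cited propositions.
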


In particular, the right factor $Ff$ is a (free) $\F$-algebra and hence an $R$-algebra, and hence a fibration. The left factor $Cf$ is a (free) $\C$-coalgebra, and in particular lifts against all $\F$-algebras by Lemma \ref{lem:lifting}. It follows that $Cf$ is a trivial cofibration. Thus, it follows that: 

\begin{thm}\label{mainthm}
On any  topologically bicomplete category $\cC$ satisfying the monomorphism hypothesis there exists an algebraic weak factorization system $(\C, \F)$ whose right class consists precisely of the $h$-fibrations, while the left class is  the class of strong $h$-cofibrations that are homotopy equivalences.  
\end{thm}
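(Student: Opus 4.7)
The plan is to deduce Theorem~\ref{mainthm} from Theorem~\ref{technicalmainthm} by identifying the left and right classes of the algebraic weak factorization system $(\C,\F)$ produced there. The necessary inputs are already in place: algebraic control over fibrations via Proposition~\ref{charfib}, the canonical lifting of coalgebras against algebras in Lemma~\ref{lem:lifting}, and the closure properties of trivial cofibrations recorded in Lemma~\ref{strongcof}.

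For the right class, I would use that $\F$ is, by construction, the algebraically free monad on the pointed endofunctor $R$, so that the forgetful functor $\mathbf{Alg}_\F \to \mathbf{Alg}_R$ is an isomorphism over $\cC^{\mathbf{2}}$. Combined with the identification $\mathbf{Alg}_R \cong \cI^{\boxslash}$ of Proposition~\ref{charfib}, a map admits the structure of an $\F$-algebra precisely when it is an $h$-fibration. Since $h$-fibrations are closed under retracts (being defined by a lifting property), the right class of the AWFS --- the retract closure of $\F$-algebras --- coincides with the class of $h$-fibrations.

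For the left class, I would show the two containments separately. Forward direction: Lemma~\ref{lem:lifting} shows that any $\C$-coalgebra lifts canonically against any $\F$-algebra, hence against every $h$-fibration, and in particular against every $h$-fibration that is a homotopy equivalence, so $\C$-coalgebras are strong $h$-cofibrations, and this extends to the retract closure. To verify that these maps are also homotopy equivalences, I would inspect Kelly's free monad construction stage by stage: at each successor step the left functor is built via pushouts of iterated applications of Cole's step-one $L$, each of which is a pushout of some cylinder inclusion $i_0 \co A \to A \otimes I$ and hence a trivial cofibration by Lemma~\ref{strongcof}(i); at limit stages we take sequential colimits. Lemma~\ref{strongcof}(ii) then lets us conclude by transfinite induction that the left factor $Cf$ of the converged construction is a trivial cofibration. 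Since every $L$-coalgebra is a retract of a free one $Cf$ (via its coalgebra structure), the entire left class lies in the strong $h$-cofibrations that are homotopy equivalences. For the reverse direction, given a strong $h$-cofibration $i$ that is a homotopy equivalence, I would factor $i = Fi \cdot Ci$ using $(\C,\F)$; then $Fi$ is an $h$-fibration, and by two-out-of-three for homotopy equivalences applied to $Ci$ and $i$, it is itself a homotopy equivalence. Because $i$ is a strong cofibration and $Fi$ is an $h$-fibration that is a homotopy equivalence, $i$ lifts against $Fi$, producing a retract diagram that exhibits $i$ as a retract of the $\C$-coalgebra $Ci$, placing $i$ in the left class.

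The step I expect to be most delicate is verifying that $Cf$ is a homotopy equivalence. Because $R$ is not well-pointed (Remark~\ref{rmk:wellpointed}), Kelly's construction cannot be replaced by the naive chain $\id \to R \to R^2 \to \cdots$, and its successor stages genuinely involve coequalizers alongside pushouts. The real content is to check that, under the monomorphism hypothesis, the colimits appearing in the transfinite sequence are all assembled from pushouts and sequential colimits of arrows of the form $i_0 \co A \to A \otimes I$, at which point Lemma~\ref{strongcof}(ii) closes the argument. This unpacking is where most of the technical work lives and where the algebraic setup of Section~\ref{freemonad} earns its keep.
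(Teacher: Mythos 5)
Your identification of the right class agrees with the paper's: $\F$ is the algebraically free monad on $R$, so $\mathbf{Alg}_\F \cong \mathbf{Alg}_R \cong \cI^\boxslash$ by Proposition~\ref{charfib}, and the retract closure is the class of $h$-fibrations. The overall structure for the left class (forward containment from Lemma~\ref{lem:lifting}, reverse containment by factoring and retracting) is also sound, and it matches the paper's logic. The gap lies in the single step you yourself flag as ``most delicate'': verifying that $\C$-coalgebras are homotopy equivalences by transfinite induction through Kelly's construction. Because $R$ is not well-pointed (Remark~\ref{rmk:wellpointed}), each successor stage of Kelly's free monad sequence passes through a coequalizer --- the quotient maps $\omega^\beta_f$ displayed in \S\ref{app2} --- and the left factor $C^\beta f$ is built by composing applications of $L$ with these quotient maps, not merely by pushouts of cylinder inclusions followed by sequential colimits. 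Lemma~\ref{strongcof}(ii) records closure of trivial cofibrations under retracts, pushouts, and sequential colimits, but says nothing about coequalizer quotients; indeed in $\Top$ the maps $\omega^\beta_f$ are non-injective identifications and there is no reason they should be $h$-cofibrations, let alone trivial ones. So ``the colimits appearing in the transfinite sequence are all assembled from pushouts and sequential colimits of arrows $i_0\co A\to A\otimes I$'' is not true, and the induction does not close.

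The paper avoids this analysis entirely. Once one has the identification $\mathbf{Alg}_\F\cong\cI^\boxslash$, the statement that $Cf$ is a trivial cofibration follows from $Cf$ lifting against every $h$-fibration together with the characterization, due to Schw\"{a}nzl--Vogt and recorded in \cite[4.3.1, 4.3.3]{maysig}, that the class of maps with the left lifting property against all $h$-fibrations is exactly the strong $h$-cofibrations that are homotopy equivalences. (If you prefer a self-contained argument for the homotopy-equivalence part: the endpoint projection $N(Cf)\to E^\omega f$ from the mapping path object is an $h$-fibration --- it is a composite of a pullback of the $h$-fibration $(p_0,p_1)\co Z^I\to Z\times Z$ with a product projection --- and a lift of $Cf$ against it exhibits $Cf$ as the inclusion of a strong deformation retract.) Either way, the correct move is to exploit the algebraic control given by $\mathbf{Alg}_\F\cong\cI^\boxslash$ rather than to chase the internal structure of Kelly's colimit; that is precisely the point of the algebraic framework, and it is what rescues the argument where Cole's original proof, and your proposed transfinite induction, break down.
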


By work of \cite{schwaenzlvogt}, nicely summarized in \cite[4.3.1 and 4.3.3]{maysig},  we have an immediate corollary:
\begin{cor}\label{maincor}
Any topologically bicomplete category $\cC$ satisfying the monomorphism hypothesis admits an $h$-model structure.
\end{cor}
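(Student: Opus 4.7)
The plan is to assemble the model structure from Theorem \ref{mainthm} together with classical results, verifying the axioms (2-out-of-3, retract closure, lifting, and factorization) one by one. The three distinguished classes are the homotopy equivalences, the $h$-fibrations, and the strong $h$-cofibrations. Two of the axioms are essentially formal: homotopy equivalences in a topologically bicomplete category satisfy 2-out-of-3 by a standard argument (concatenation of homotopies, using the interval $I$) and are closed under retracts; strong $h$-cofibrations and $h$-fibrations are both defined by lifting properties, hence automatically closed under retracts and composition.

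Next, I would handle the lifting axioms. That strong $h$-cofibrations lift against $h$-fibrations which are homotopy equivalences holds by the very definition of ``strong cofibration'' given in Section \ref{hmodel}. The dual statement --- that strong $h$-cofibrations which are homotopy equivalences lift against $h$-fibrations --- is precisely supplied by Theorem \ref{mainthm}: the algebraic weak factorization system $(\C,\F)$ has left class the trivial strong $h$-cofibrations and right class the $h$-fibrations, and Lemma \ref{lem:lifting} guarantees that any map in the left class lifts canonically against any map in the right class.

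For the factorization axiom, one half --- the factorization as a trivial strong $h$-cofibration followed by an $h$-fibration --- is delivered by Theorem \ref{mainthm} itself. The other half, factoring any map as a strong $h$-cofibration followed by an $h$-fibration that is simultaneously a homotopy equivalence, is supplied by the classical (double) mapping cylinder construction of Schw\"anzl--Vogt \cite{schwaenzlvogt}, in the form summarized in \cite[4.3.1, 4.3.3]{maysig}: every $f\co X \to Y$ factors as $X \to Mf \to Y$ where the first map is a strong $h$-cofibration and the second admits a strong deformation retract, which in the topologically bicomplete setting is automatically an $h$-fibration and a homotopy equivalence.

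The real work is already embedded in Theorem \ref{mainthm}, which supplies the factorization and lifting result that Cole's original argument failed to establish; the present corollary is essentially a bookkeeping exercise once that ingredient is in place. The only subtlety worth double-checking is that the Schw\"anzl--Vogt mapping cylinder factorization, constructed in enriched generality, does indeed land in the correct classes (strong $h$-cofibration on the left, trivial $h$-fibration on the right); this is precisely the content of the references to \cite{schwaenzlvogt} and \cite[Ch.\ 4]{maysig} cited in the paper, so no further argument is required.
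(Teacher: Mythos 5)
Your proposal follows the paper's own (very terse) strategy: Theorem \ref{mainthm} delivers the trivial-cofibration--fibration factorization and the lifting of trivial strong $h$-cofibrations against $h$-fibrations, while the remaining axioms and the other factorization are delegated to Schw\"anzl--Vogt and to \cite[4.3.1, 4.3.3]{maysig}, exactly as the paper does.

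One factual slip in your interpolation is worth flagging: the claim that $Mf \to Y$ ``admits a strong deformation retract, which in the topologically bicomplete setting is automatically an $h$-fibration'' is not correct. The mapping cylinder projection $Mf \to Y$ is indeed the retraction of a strong deformation retract (hence a homotopy equivalence), but that does not make it an $h$-fibration --- already in $\Top$ the map collapsing a whisker onto a circle has a strong deformation retract section without being a Hurewicz fibration. The cofibration--trivial-fibration factorization cited in \cite[4.3.1, 4.3.3]{maysig} is not the bare mapping cylinder factorization; rather one factors $f$ through $Mf$ to get a strong $h$-cofibration followed by a homotopy equivalence, then applies the trivial-cofibration--fibration factorization of Theorem \ref{mainthm} to $Mf \to Y$, and uses closure of strong $h$-cofibrations under composition together with two-out-of-three to conclude that the resulting $h$-fibration is also a homotopy equivalence. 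Since you ultimately defer to the cited references, your conclusion stands, but the sentence as written asserts something false.
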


\begin{proof}[Proof of Theorem \ref{technicalmainthm}]
Using Lemma \ref{smallnesslemma}, Cole's step-one right functor $R$ satisfies the smallness condition $(\dagger)$ required to construct the free monad $\F$. By Propositions \ref{charfib} and \ref{prop:boxslashcomp}, $\mathbf{Alg}_{\F} \cong \mathbf{Alg}_R \cong \cI^\boxslash$ admits a vertical composition law.  We conclude that the resulting free monad is part of an algebraic weak factorization system by applying Theorem \ref{thm:recognition}.
\end{proof}

\begin{rmk}\label{secondproofrmk}
An alternative proof of the main theorem avoids Theorem \ref{thm:recognition}. Instead, one can show that the pointed endofunctor $L$ carries a natural comonad structure, where the comultiplication $\delta$ is constructed as follows: First note that, by Lemma \ref{pathchar}, the commutative square
\[
\xymatrix{Nf \ar[r]^-{\phi_f} \ar[d]_{i_0} & X \ar[d]^{Lf} \\
Nf \otimes I \ar[r]_-{\psi_f} & Ef
}
\]
is classified by a map $\tilde{\delta}_f \co Nf \to N{Lf}$. Pushing out, we obtain a map $\delta_f \co Ef \to ELf$. This defines (the codomain component of) a natural transformation $\vec{\delta} \co L \to L^2$ making $L$ into a comonad.

Garner shows \cite[4.21-22]{garnerunderstanding} that the free monad can be constructed in the category of functorial factorizations whose left factors is a comonad. Hence, this extra structure is enough to guarantee the existence of an appropriate algebraic weak factorization system, thereby providing another proof of Theorem \ref{technicalmainthm}.  
\end{rmk}

These arguments emphasize the versatility of the theory of algebraic weak factorization systems. While Garner shows that \emph{cofibrantly generated} algebraic weak factorization systems are produced by Kelly's free monad construction, we demonstrate that these techniques also work in non-cofibrantly generated cases.

\section{Appendix}\label{app}

\subsection{Cole's construction, explicitly}\label{app1}
In the light of Remark \ref{rmk:wellpointed}, Cole's construction does not produce the free monad on the pointed endofunctor $R$ because $R$ is not \emph{well-pointed}.  In this appendix we explain in detail why the maps $\psi_{R^nf}$ don't glue to give a lift in the diagram
\begin{equation}\label{nonlift}
\xymatrix{ NR^\omega f \ar[r]^-{\phi_{R^{\omega}f}} \ar[d]_{i_0} & Z \ar[d]^{R^\omega f} \\
NR^\omega f \otimes I \ar[r]_-{\widehat{\chi_{R^{\omega}f}}} \ar@{-->}[ru] & Y
}
\end{equation}
the existence of which is equivalent to $R^\omega f$ being a fibration by Remark \ref{redundantrmk}. To this end, we will explicitly describe the underlying sets of the objects of the first two iterations of Cole's construction \cite[Ch.~5]{colemany} applied in the case of topological spaces. 

Let $f\co X \to Y$ be a morphism of topological spaces. The points of $Nf$ are pairs $(x,p)$ with $x \in X$ and $p\co I \to Y$ a path in $Y$ starting at $f(x)$. The map $\phi_f\co Nf \to X$ simply forgets the path, i.e., $(x,p) \mapsto x$, from which we deduce that, as a set, the pushout\footnote{denoted by $Z_1$ in \cite{colemany}} $Ef$ consists of two kinds of elements:
\begin{enumerate}
 \item[(i)] $(x, c_{f(x)}, 0) \in Ef$, with $x \in X$ and $c_{f(x)}$ being the constant path at $f(x)$
 \item[(ii)] $(x,p,t) \in Ef$ with $(p,x) \in Nf$, $t \in (0,1]$.
\end{enumerate}
The induced map $Rf\co Ef \to Y$ sends an element $(x,p,t) \in Ef$ to the point $p(t)$. 

Similarly, we can describe the space $E{Rf}$ as a set. Points in $NRf$ are pairs consisting of $(x,p,t) \in Ef$ together with a path $p' \co I \to Y$ starting at $p(t)$. Points in $ERf$  are of the general form $(x, p, t, p', t')$ with $x \in X$, $t,t' \in I$, $p\co I \to Y$ a path starting at $f(x)$, and $p'\co I \to Y$ a path starting at $p(t)$. There are four types:
\begin{enumerate}
 \item $(x, c_{f(x)}, 0, c_{f(x)}, 0)$
 \item $(x, p, t, c_{p(t)}, 0)$ with $t\in (0,1]$
 \item $(x, c_{f(x)}, 0, p', t')$ with $t' \in (0,1]$
 \item $(x, p, t, p', t')$, with $t, t' \in (0,1]$
\end{enumerate}
Recall that the object $Z$ in Cole's proposed factorization $X \xrightarrow{j} Z \xrightarrow{R^\omega f} Y$ is defined to be the colimit of the $Z_{n+1} = ER^nf$ with respect to the maps $LR^nf$. Since all the $LR^nf$ are closed embeddings, in order for the maps $\psi_{R^{n}f}$ to glue, the following square has to commute
\begin{equation}\label{noncomm}
\xymatrix{N{R^nf} \otimes I \ar[r]^-{\psi_{R^nf}} \ar[d]_{N(LR^{n}f,\id) \otimes I} & E{R^nf} \ar[d]^{LR^{n+1}f} \\
N{R^{n+1}f} \otimes I \ar[r]_-{\psi_{R^{n+1}f}} & E{R^{n+1}f}
}
\end{equation}
the left vertical map being the one with respect to which the colimit object $NR^\omega f$ is formed. Observe that, by construction, this square commutes if the right vertical map $LR^{n+1}f$ is replaced by $E(LR^{n}f,\id)$. Using the notation $(R,\vec{\eta})$ for the pointed endofunctor, the map $LR^{n+1}f$ is the domain component of $\vec{\eta}_{R^{n+1}f}$ while $E(LR^nf,\id)$ is the domain component of $R\vec{\eta}_{R^nf}$.

But specializing to $n=0$, we see that the $LRf$ sends points in $Ef$ of type (ii) to points of type (2) in $E{Rf}$, while $E(Lf,\id)$ maps those points to elements of type (3). Therefore, the diagram \eqref{noncomm} does not commute and the maps $\psi_{R^nf}$ do not glue to give a lift in \eqref{nonlift}. \\

\begin{rmk}
We should note that this argument can't rule out the possibility that the map $R^{\omega}f$ is a fibration; however, this seems very difficult to check, as the crafted candidate fails to provide a lift in the colimit. 
\end{rmk}

\subsection{Our construction, explicitly}\label{app2}
The free monad construction is described in \cite[4.16]{garnerunderstanding}. For the reader's convenience, we describe the first few stages of the construction of the functorial factorization whose right factor is the free monad on the pointed endofunctor of $(L,R)$.

Let $C^1 = L$ and let $F^1 = R$. Define $C^2$ and $F^2$ using the coequalizer $\omega^2_f$
\[ \xymatrix{ X \ar@{=}[r] \ar[d]_{Lf} & X \ar[d]^{LRf \cdot Lf} \ar@{=}[r] & X \ar@{-->}[d]^{C^2f} \\ Ef \ar@<.5ex>[r]^{LRf} \ar@<-.5ex>[r]_{E(Lf,\id)} \ar[d]_{Rf} & ERf \ar@{-->}[r]_{\omega^2_f} \ar[d]^{R^2f} & E^2f \ar@{-->}[d]^{F^2f} \\ Y \ar@{=}[r] & Y \ar@{=}[r] & Y}\] Specializing to the category of topological spaces, in the notation of the previous section, the quotient map $\omega^2_f$ identifies points of types (2) and (3) in the obvious way.

Continuing, define $C^3$ and $F^3$ using the coequalizer $\omega^3_f$
\[\xymatrix{ X \ar@{=}[r] \ar[d]_{LRf \cdot Lf} & X \ar[d]_{LF^2f \cdot C^2f} \ar@{=}[r] & X \ar@{-->}[d]^{C^3f} \\ ERf \ar@<.5ex>[r] \ar@<-.5ex>[r] \ar[d]_{R^2f} & EF^2f \ar[d]_{RF^2f} \ar@{-->}[r]^{\omega^3_f} & E^3f \ar@{-->}[d]^{F^3f} \\ Y \ar@{=}[r] & Y \ar@{=}[r] & Y}\]
 of the following parallel pair of morphisms 
\[ \xymatrix{X \ar[d]|{LRf \cdot Lf} \ar@{=}[r] & X \ar[d]|{C^2f} \ar@{=}[r] & X \ar[d]|{LF^2f \cdot C^2f} \\ ERf \ar[d]_{R^2f} \ar[r]^{\omega^2_f} & E^2f \ar[d]_{F^2f} \ar[r]^{LF^2f} & EF^2f \ar[d]^{RF^2f} \\ Y \ar@{=}[r] & Y \ar@{=}[r] & Y}
\xymatrix{ X \ar@{=}[r] \ar[d]|{LRf \cdot Lf} & X \ar[d]|{LR^2f \cdot LRf \cdot Lf} \ar@{=}[r] & X \ar[d]|{LF^2f \cdot C^2f} \\ ERf \ar[r]_{E(LR,\id)} \ar[d]_{R^2f} & ER^3f \ar[d]_{R^3f} \ar[r]_{E(\omega^2f,\id)} & EF^2f \ar[d]^{RF^2f} \\ Y \ar@{=}[r] & Y \ar@{=}[r] & Y}
\]
The remaining details are left as an exercise.

\bibliographystyle{alpha}

\end{document}